\newtheorem{theorem}{Theorem}[section]
\numberwithin{equation}{section}
\begin{document}
\title{Computing recurrence coefficients of multiple orthogonal polynomials\footnote{GF is supported by MNiSzW Iuventus Plus grant Nr 0124/IP3/2011/71, MH and WVA are supported by FWO research project G.0934.13, KU Leuven research grant OT/12/073 and the Belgian Interuniversity Attraction Poles Programme P7/18.}}
\author{Galina Filipuk\footnote{Faculty of Mathematics, Informatics and Mechanics, University of Warsaw, Poland. \newline
filipuk@mimuw.edu.pl},
\ Maciej Haneczok\footnotemark[3],\ Walter Van Assche\footnote{Department of Mathematics, KU Leuven, Belgium. \newline 
maciej.haneczok@wis.kuleuven.be; walter.vanassche@wis.kuleuven.be}} 
\date{}
\maketitle

\begin{abstract}
Multiple orthogonal polynomials satisfy a number of recurrence relations, in particular there is a $(r+2)$-term recurrence relation connecting the type II multiple orthogonal polynomials near the diagonal (the so-called step-line recurrence relation) and there is a system of $r$ recurrence relations connecting the nearest neighbors (the so-called nearest neighbor recurrence relations). In this paper we deal with two problems. First we show how one can obtain the nearest neighbor recurrence coefficients (and in particular the recurrence coefficients of the orthogonal polynomials for each of the defining measures) from the step-line recurrence coefficients. Secondly we show how one can compute the step-line recurrence coefficients from the recurrence coefficients of the orthogonal polynomials of each of the measures defining the multiple orthogonality.
\end{abstract}

\section{Introduction}

Multiple orthogonal polynomials are polynomials of one variable that satisfy orthogonality conditions with respect to $r>1$ positive measures.
In this paper we will only consider positive measures on the real line. Let $\vec{n}=(n_1,\ldots,n_r) \in \mathbb{N}^r$ be a multi-index and
$|\vec{n}| = n_1 + \cdots + n_r$ its length and let $\mu_1,\ldots,\mu_r$ be $r$ positive measures on the real line. There are two types of multiple orthogonal polynomials \cite[Chapter 23]{Ismail}, \cite{ABVA}, \cite{ACVA}, \cite{ECVA}. Type I multiple orthogonal polynomials are such that $(A_{\vec{n},1},\ldots,A_{\vec{n},r})$ is a vector of $r$ polynomials, with $\deg A_{\vec{n},j} \leq n_j-1$, satisfying 
\begin{equation}  \label{TypeIorth}
   \int x^k \sum_{j=1}^r A_{\vec{n},j}(x) \, d\mu_j(x) = 0, \qquad 0 \leq k \leq |\vec{n}|-2, 
\end{equation}
with normalization
\[   \int x^{|\vec{n}|-1} \sum_{j=1}^r A_{\vec{n},j}(x) \, d\mu_j(x) = 1.  \]
Type II multiple orthogonal polynomials are monic polynomials $P_{\vec{n}}$ of degree $|\vec{n}|$ for which
\begin{equation} \label{TypeIIorth} 
   \int x^k P_{\vec{n}}(x) \, d\mu_j(x) = 0, \qquad 0 \leq k \leq n_j-1 
\end{equation}
holds for $1 \leq j \leq r$. The existence (and unicity) is not guaranteed, but if the type I and type II multiple polynomials exist with the above normalization, then they are unique and then the multi-index $\vec{n}$ is said to be a normal index. The measures $(\mu_1,\ldots,\mu_r)$ are a normal system
if all the multi-indices are normal.

Multiple orthogonal polynomials satisfy a number of recurrence relations. Let
$p_{rn}(x) = P_{(n,n,\ldots,n)}(x)$, $p_{rn+1}(x) = P_{(n+1,n,n,\ldots,n)}(x)$, and in general
$p_{rn+j}(x) = P_{(n+1,\ldots,n+1,n\ldots,n)}(x)$ for $0 \leq j \leq r$, where $(n+1,\ldots,n+1,n\ldots,n)$ has $j$ times the component $n+1$ and
$r-j$ times the component $n$, i.e.,
\[  (n+1,\ldots,n+1,n\ldots,n) =  (n,n,\ldots,n) + \sum_{i=1}^j \vec{e}_i, \]
where $\vec{e}_i$ are the standard unit vectors in $\mathbb{N}^r$. Then the \textit{step-line polynomials} $p_{m}(x)$ satisfy the following
$(r+2)$-term recurrence relation
\begin{equation}  \label{stepr-rec}
    xp_{m}(x) = p_{m+1}(x) + \sum_{j=0}^r \beta_{m,j} p_{m-j}(x), 
\end{equation}
where $\beta_{m,j}$ are real recurrence coefficients and $p_0=1$ and $p_{-j}=0$ for $1 \leq j \leq r$. This recurrence relation corresponds
to the well known three term recurrence relation for orthogonal polynomials when $r=1$.
These step-line polynomials (the type II multiple orthogonal polynomials near the diagonal) are also known as $d$-orthogonal polynomials
(with $d=r$) and the orthogonality in \eqref{TypeIIorth} becomes 
\[    \int p_m(x) x^k\, d\mu_j(x) = 0, \qquad 0 \leq k \leq \lfloor (m-j)/r \rfloor. \]
See, e.g., \cite{Maroni}, \cite{DouakMaroni}, \cite{BenCheikhDouak}. This recurrence relation only connects multiple orthogonal polynomials
near the diagonal $(n,n,\ldots,n)$.
All multiple orthogonal polynomials (of a normal system) are related by a system of $r$ recurrence relations relating $P_{\vec{n}}$ with
its nearest neighbors $P_{\vec{n}+\vec{e}_k}$ and $P_{\vec{n}-\vec{e}_j}$. The system of \textit{nearest neighbor recurrence relations} 
is given by (see \cite{WVA})
\begin{equation}  \label{NNrec}
   x P_{\vec{n}}(x) = P_{\vec{n}+\vec{e}_k}(x) + b_{\vec{n},k} P_{\vec{n}}(x) + \sum_{j=1}^r a_{\vec{n},j} P_{\vec{n}-\vec{e}_j}(x), \qquad
    1 \leq k \leq r.
\end{equation}

In this paper we deal with two problems. For the first problem we assume that the recurrence coefficients $\beta_{m,j}$, $m \in \mathbb{N},\  0 \leq j \leq r$, in the step-line recurrence relation \eqref{stepr-rec} are given, and the goal is to find all the nearest neighbor recurrence coefficients 
$a_{\vec{n},j}, b_{\vec{n},j}$ with $\vec{n} \in \mathbb{N}^r$ and $1 \leq j \leq r$. In particular this would give all the recurrence coefficients
of the (monic) orthogonal polynomials for each of the measures $\mu_j$:
\[   \int P_n(x;\mu_j)P_m(x;\mu_j)\, d\mu_j(x) = \gamma_n^{-2}(\mu_j) \delta_{m,n}, \]
which satisfy the three term recurrence relation
\[   xP_n(x;\mu_j) = P_{n+1}(x;\mu_j) + b_n(\mu_j) P_n(x;\mu_j) + a_n^2(\mu_j) P_{n-1}(x;\mu_j). \]
Indeed, one has 
\[    b_n(\mu_j) = b_{n\vec{e}_j,j}, \quad   a_n^2(\mu_j) = a_{n\vec{e}_j,j}.  \]
In Section 2 we will show, for $r=2$, how one first can obtain the recurrence relation for the $r$ shifted step-line polynomials, i.e., the multiple orthogonal polynomials with a multi-index $k\vec{e}_j + \vec{n}$, with $k \in \mathbb{N}$ and $1\leq j \leq r$ fixed and $\vec{n}$ a multi-index on the step-line. Then, in a second step, we show how to obtain the nearest neighbor recurrence coefficients from the shifted step-line recurrence coefficients.
We explain the procedure for $r=2$ so that the reasoning is easy to follow and not obscured by the notation. 

The second problem is the inverse of the first problem: we assume that the recurrence coefficients $a_{n+1}(\mu_j),b_n(\mu_j)$ of the orthogonal polynomials of each of the measures $\mu_j$, $1 \leq j \leq r$, are given, and we show how one can compute all the nearest neighbor recurrence
coefficients and the step-line recurrence coefficients from this input. In Section 3 we will explain the case $r=2$ in detail. We will also show
how to find the nearest neighbor recurrence coefficients from the marginal recurrence coefficients for general $r$.

In order to find the recurrence coefficients  $\beta_{m,j}$, $m \in \mathbb{N},\  0 \leq j \leq r$ in the step-line recurrence relation, one may use
either the Jacobi-Perron algorithm or the vector QD-algorithm. The Jacobi-Perron algorithm generates a vector continued fraction and was introduced by Jacobi in 1868 \cite{Jacobi} and studied in detail by Perron in 1907 \cite{Perron}. A modern version of the Jacobi-Perron algorithm and its relevance for simultaneous rational approximation of functions can be found in \cite{Parus}. Vector continued fractions and the Jacobi-Perron algorithm are quite popular in number theory to produce simultaneous Diophantine approximations to several real numbers, see the monographs of Schweiger \cite{Schweiger1} \cite{Schweiger2}. Another way to obtain the step-line recurrence coefficients from the moments of the measures $\mu_1,\ldots,\mu_r$ is to use a generalization of the QD-algorithm proposed by Van Iseghem \cite{Iseghem}.
The classical QD-algorithm of Rutishauser \cite{Rutis} can be used to find the recurrence coefficients $a_{n+1}(\mu_j),b_n(\mu_j)$ of the orthogonal polynomials of each of the measures $\mu_j$, but one can also use the (modified) Chebyshev algorithm as described in \cite[\S 2.1.7]{Gautschi}.
In this paper we assume that the step-line recurrence coefficients are given (for the first problem) or we assume that the recurrence coefficients 
$a_{n+1}(\mu_j),b_n(\mu_j)$ are given for every measure $\mu_j$ with $1 \leq j \leq r$ (for the second problem).

\section{The recurrence coefficients along the step-line}

In this section we only consider multiple orthogonal polynomials with $r=2$. Hence the (monic) type II multiple orthogonal polynomials
$P_{n,m}$ depend on a multi-index $(n,m) \in \mathbb{N}^2$. 
Let $p_{2n}(x) = P_{n,n}(x)$ and $p_{2n+1}(x) = P_{n+1,n}(x)$, then the recurrence relation along the step-line is
\begin{equation}  \label{step-line}
  xp_n(x) = p_{n+1}(x) + \beta_n p_n(x) + \gamma_n p_{n-1}(x) + \delta_n p_{n-2}(x). 
\end{equation}
It is important to note that the step-line recurrence coefficients $(\beta_n)_{n \geq 0}, (\gamma_n)_{n\geq 1}$ and $(\delta_n)_{n \geq 2}$
do not determine the measures $\mu_1$ and $\mu_2$ in a unique way, even if we normalize the measures to be probability measures. The first measure
is determined uniquely as a probability measure, but for the second measure one can use any convex combination $\lambda \mu_1 + (1-\lambda) \mu_2$
because
\[   \lambda \int p_n(x) x^k\, d\mu_1(x) +  (1-\lambda) \int p_n(x) x^k d\mu_2(x) = 0, \qquad k \leq \lfloor \frac{n}{2} \rfloor -1, \]
since the first of these integrals vanishes for $k \leq \lfloor \frac{n-1}2 \rfloor $ and $\lfloor \frac{n}{2} \rfloor -1 \leq \lfloor \frac{n-1}2 \rfloor$, see \cite[Remark 2.2]{JC}.
This degree of freedom will
be reflected when we want to compute the recurrence coefficients of the orthogonal polynomials $p_n(x;\mu_1)$ and $p_n(x;\mu_2)$.
 
The nearest neighbor recurrence relations are
\begin{eqnarray}
   x P_{n,m}(x) &=& P_{n+1,m}(x) + c_{n,m} P_{n,m}(x) + a_{n,m} P_{n-1,m}(x) + b_{n,m} P_{n,m-1}(x), \label{NN1} \\
   x P_{n,m}(x) &=& P_{n,m+1}(x) + d_{n,m} P_{n,m}(x) + a_{n,m} P_{n-1,m}(x) + b_{n,m} P_{n,m-1}(x). \label{NN2}
\end{eqnarray}
As a consequence one has
\begin{equation}  \label{P-P}
    P_{n+1,m}(x) - P_{n,m+1}(x) = \kappa_{n,m} P_{n,m}(x), 
\end{equation}
where $\kappa_{n,m} = d_{n,m}-c_{n,m}$.

\subsection{From step-line to shifted step-line}

We introduce for $j \geq 0$ the polynomials 
\begin{equation}  \label{shiftj}
 p_{2n+j}^{(j,0)}(x) = P_{n+j,n}(x), \quad p_{2n+j+1}^{(j,0)}(x) = P_{n+j+1,n}(x), 
\end{equation}
and for $k \geq 0$
\begin{equation}  \label{shiftk}
 p_{2n+k}^{(0,k)}(x) = P_{n,n+k}(x), \quad p_{2n+k+1}^{(0,k)}(x) = P_{n+1,n+k}(x). 
\end{equation}
The polynomials $p_n^{(j,0)}$ are the multiple orthogonal polynomials on a shifted step-line with a shift $j$ in the direction of $\vec{e}_1 = (1,0)$. The polynomials $p_n^{(0,k)}$ are those on the shifted step-line with a shift $k$ in the direction of $\vec{e}_2 = (0,1)$.

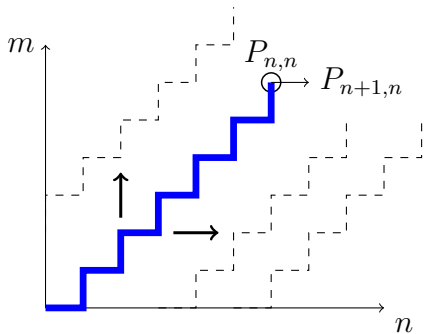
\begin{figure}[ht]
\centering
\begin{tikzpicture}[domain=0:2]
\draw[->] (0,0) -- (4.5,0) node[below right] {$n$};
\draw[->] (0,0) -- (0,3.5) node[left] {$m$};
\draw[line width=0.2mm] (3,3) circle (0.7ex) node[above]{$P_{n,n}$};
\draw[line width=0.9mm, color=blue] (0,0) -- (0.5,0) -- (0.5,0.5) -- (1,0.5) -- (1,1) -- (1.5,1) -- (1.5,1.5)  -- (2,1.5) -- (2,2) -- (2.5,2) -- (2.5,2.5) -- (3,2.5) -- (3,3);
\draw[->, line width=0.4mm] (1.7,1) -- (2.3,1);
\draw[-, color=black, dashed] (1.5,0) -- (2,0) -- (2,0.5) -- (2.5,0.5) -- (2.5,1) -- (3,1) -- (3,1.5) -- (3.5,1.5) -- (3.5,2) -- (4,2) -- (4,2.5);
\draw[-, color=black, dashed] (2.5,0) -- (3,0) -- (3,0.5) -- (3.5,0.5) -- (3.5,1) -- (4,1) -- (4,1.5) -- (4.5,1.5) -- (4.5,2) -- (5,2) -- (5,2.5);
\draw[->, line width=0.4mm] (1,1.2) -- (1,1.8);
\draw[-, color=black, dashed] (0,1) -- (0,1.5) -- (0.5,1.5) -- (0.5,2) -- (1,2) -- (1,2.5) -- (1.5,2.5)  -- (1.5,3) -- (2,3) -- (2,3.5) -- 
(2.5,3.5) -- (2.5,4);
\draw[->] (3,3) -- (3.5,3) node[right] {$P_{n+1,n}$};
\end{tikzpicture}
\caption{Step-line and shifted step-lines}
\label{fig:step-lines}
\end{figure}

These shifted step-line polynomials again satisfy a four term recurrence relation, which we denote by
\begin{equation}  \label{step-line+j}
  xp_n^{(j,0)}(x) = p_{n+1}^{(j,0)}(x) + \beta_n^{(j,0)} p_n^{(j,0)}(x) + \gamma_n^{(j,0)} p_{n-1}^{(j,0)}(x) + \delta_n^{(j,0)} p_{n-2}^{(j,0)}(x), 
\end{equation}
with initial conditions $\gamma _j^{(j,0)} = \delta _j^{(j,0)} = \delta _{j+1} ^{(j,0)}=0$ and in a similar way
\begin{equation}  \label{step-line+k}
  xp_n^{(0,k)}(x) = p_{n+1}^{(0,k)}(x) + \beta_n^{(0,k)} p_n^{(0,k)}(x) + \gamma_n^{(0,k)} p_{n-1}^{(0,k)}(x) + \delta_n^{(0,k)} p_{n-2}^{(0,k)}(x),
\end{equation}
with initial conditions $\gamma _k^{(0,k+1)} = \delta _k^{(0,k+1)} = \delta _{k+1} ^{(0,k+1)}=0$. 
With this notation we have $\beta_n^{(0,0)} = \beta_n$, $\gamma_n^{(0,0)}=\gamma_n$, and $\delta_n^{(0,0)}=\delta_n$. We introduce two more sequences
$(c_n^{(j,0)})_{n\geq 0}$ and $(c_n^{(0,k)})_{n \geq 0}$ by
\begin{equation}  \label{c(j,0)}
      P_{n+j+1,n}(x) - P_{n+j,n+1}(x) = c_n^{(j,0)} P_{n+j,n}(x), 
\end{equation}
and
\begin{equation}  \label{c(0,k)}
      P_{n+1,n+k}(x) - P_{n,n+k+1}(x) = c_n^{(0,k)} P_{n,n+k}(x), 
\end{equation}
so that \eqref{P-P} gives
\begin{equation}  \label{kappa}
 c_n^{(j,0)} = \kappa_{n+j,n} = d_{n+j,n}-c_{n+j,n}, \quad c_n^{(0,k)} = \kappa_{n,n+k} = d_{n,n+k}-c_{n,n+k}.  
\end{equation}

Our first result shows how one can obtain the recurrence coefficients $(\beta_n^{(j,0)},\gamma_n^{(j,0)},\delta_n^{(j,0)})$ from the recurrence coefficients along the step-line.

\begin{theorem}  \label{thm:(j,0)}
One has for all $j \geq 1$
\begin{eqnarray}  \label{2n+j}
   \beta_{2n+j}^{(j,0)} &=&  \beta_{2n+j}^{(j-1,0)} - c_n^{(j,0)}, \qquad n \geq 0,  \nonumber \\
   \gamma_{2n+j}^{(j,0)} &=& \gamma_{2n+j}^{(j-1,0)}, \qquad\qquad n \geq 1,  \\
   \delta_{2n+j}^{(j,0)} &=& \delta_{2n+j}^{(j-1,0)} - c_{n-1}^{(j,0)} \gamma_{2n+j}^{(j-1,0)}, \qquad n \geq 1. \nonumber
\end{eqnarray}
and
\begin{eqnarray}  \label{2n+j+1}
   \beta_{2n+j+1}^{(j,0)} &=&  \beta_{2n+j+1}^{(j-1,0)} + c_n^{(j,0)}, \qquad n \geq 0, \nonumber \\
   \gamma_{2n+j+1}^{(j,0)} &=& \gamma_{2n+j+1}^{(j-1,0)} + c_n^{(j,0)} (\beta_{2n+j}^{(j-1,0)} - \beta_{2n+j+1}^{(j,0)}), \qquad n \geq 0, \\
   \delta_{2n+j+1}^{(j,0)} &=& \delta_{2n+j+1}^{(j-1,0)} + c_n^{(j,0)} \gamma_{2n+j}^{(j-1,0)}, \qquad n \geq 0,  \nonumber 
\end{eqnarray}
where $(c_n^{(j,0)})_{n \geq 0}$ is the solution of the Riccati type difference equation
\begin{equation}   \label{cn(j,0)-Riccati}
   c_n^{(j,0)} = \frac{c_{n-1}^{(j,0)}\delta_{2n+j+1}^{(j-1,0)}}{\delta_{2n+j}^{(j-1,0)} - c_{n-1}^{(j,0)} \gamma_{2n+j}^{(j-1,0)}}, 
\end{equation}
with initial condition
\begin{equation}   \label{c0(j,0)}
    c_0^{(j,0)} = - \frac{\delta_{j+1}^{(j-1,0)}}{\gamma_j^{(j-1,0)}} .
\end{equation}
\end{theorem}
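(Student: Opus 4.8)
\section*{Proof proposal}

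The plan is to exploit the fact that the shift-$j$ and shift-$(j-1)$ step-lines coincide at every even offset and differ at every odd offset only by a single multiple of the shared polynomial, and then to compare the two four-term recurrences \eqref{step-line+j} by expanding everything in one monic basis. First I would settle the index bookkeeping. Reading \eqref{shiftj} with $j$ replaced by $j-1$ gives $p_{2n+j}^{(j-1,0)}=P_{n+j,n}=p_{2n+j}^{(j,0)}$ and $p_{2n+j+1}^{(j-1,0)}=P_{n+j,n+1}$, while $p_{2n+j+1}^{(j,0)}=P_{n+j+1,n}$. Hence the even-offset polynomials agree, and by the defining relation \eqref{c(j,0)} the odd-offset ones satisfy
\[
 p_{2n+j}^{(j,0)}=p_{2n+j}^{(j-1,0)}, \qquad
 p_{2n+j+1}^{(j,0)}=p_{2n+j+1}^{(j-1,0)}+c_n^{(j,0)}\,p_{2n+j}^{(j,0)}.
\]
Since each $p_m^{(j,0)}$ is monic of degree $m$, the family $\{p_m^{(j,0)}\}$ is a basis, so any polynomial identity may be established by matching coefficients in this basis; this is the mechanism I would use throughout.

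For the even case $m=2n+j$ I would start from $p_{2n+j}^{(j,0)}=p_{2n+j}^{(j-1,0)}$, multiply by $x$, and apply the shift-$(j-1)$ recurrence. The four terms produced, at indices $2n+j+1$ down to $2n+j-2$, are then rewritten through the two relations above into the shift-$j$ basis (the index $2n+j-1$ being odd-offset, it brings in $c_{n-1}^{(j,0)}$). Comparing with the shift-$j$ recurrence and reading off the coefficients of $p_{2n+j+1}^{(j,0)},p_{2n+j}^{(j,0)},p_{2n+j-1}^{(j,0)},p_{2n+j-2}^{(j,0)}$ produces the three formulas in \eqref{2n+j} at once, with $\gamma$ unchanged and $\beta,\delta$ acquiring the stated $c_n^{(j,0)}$ and $c_{n-1}^{(j,0)}\gamma_{2n+j}^{(j-1,0)}$ corrections.

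The odd case $m=2n+j+1$ carries the real content. I would take $p_{2n+j+1}^{(j,0)}=p_{2n+j+1}^{(j-1,0)}+c_n^{(j,0)}p_{2n+j}^{(j-1,0)}$, multiply by $x$, apply the shift-$(j-1)$ recurrence to both summands, and re-express the result in the shift-$j$ basis. A priori this expansion has five terms, reaching down to $p_{2n+j-2}^{(j,0)}$. Matching the coefficients of the top four terms against the shift-$j$ recurrence gives the three identities of \eqref{2n+j+1}, the $\gamma$ formula taking its stated shape after inserting $\beta_{2n+j+1}^{(j,0)}=\beta_{2n+j+1}^{(j-1,0)}+c_n^{(j,0)}$. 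The shift-$j$ recurrence carries no $p_{2n+j-2}^{(j,0)}$ term, so its coefficient in the expansion must vanish, which is exactly
\[
 c_n^{(j,0)}\bigl(\delta_{2n+j}^{(j-1,0)}-c_{n-1}^{(j,0)}\gamma_{2n+j}^{(j-1,0)}\bigr)=c_{n-1}^{(j,0)}\delta_{2n+j+1}^{(j-1,0)},
\]
and this rearranges to the Riccati recurrence \eqref{cn(j,0)-Riccati}. I expect this compatibility step to be the main obstacle: the argument only works because the shift-$j$ sequence is \emph{known} to satisfy a four-term (not five-term) recurrence, being itself a step-line of multiple orthogonal polynomials, so that the vanishing of the lowest term is forced rather than postulated.

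Finally, the consistency condition is vacuous at $n=0$, since $p_{j-2}^{(j,0)}$ lies below the start of the shift-$j$ line, so $c_0^{(j,0)}$ must be determined separately. Here I would invoke the boundary condition $\delta_{j+1}^{(j,0)}=0$ recorded beneath \eqref{step-line+j}: evaluating the $\delta$ formula of \eqref{2n+j+1} at $n=0$ gives $0=\delta_{j+1}^{(j-1,0)}+c_0^{(j,0)}\gamma_j^{(j-1,0)}$, which yields the initial condition \eqref{c0(j,0)}.
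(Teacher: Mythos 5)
Your proposal is correct and follows essentially the same route as the paper's proof: both rest on the connection formula \eqref{c(j,0)} (your $p_{2n+j+1}^{(j,0)}=p_{2n+j+1}^{(j-1,0)}+c_n^{(j,0)}p_{2n+j}^{(j,0)}$, with even-offset polynomials shared), compare the shift-$j$ and shift-$(j-1)$ four-term recurrences by matching coefficients of monic polynomials of distinct degrees, extract the Riccati equation \eqref{cn(j,0)-Riccati} as the forced vanishing of the lowest-order term (your observation that this is legitimate because the shift-$j$ family is known to satisfy a four-term recurrence is exactly the role of \eqref{step-line+j}), and obtain \eqref{c0(j,0)} from the boundary condition $\delta_{j+1}^{(j,0)}=0$ at $n=0$. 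The only difference is organizational: you apply the shift-$(j-1)$ recurrence to both summands of $xp_{2n+j+1}^{(j,0)}$ and land on the Riccati relation directly in shift-$(j-1)$ data, whereas the paper substitutes its identity \eqref{xp2} mid-computation and recovers the same relation from the last equation of \eqref{extra} together with the third identity of \eqref{2n+j+1}.
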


\begin{proof}
Take the recurrence relation  \eqref{step-line+j} with $n$ replaced by $2n+j$
\[ xp_{2n+j}^{(j,0)}(x) = p_{2n+j+1}^{(j,0)}(x) + \beta_{2n+j}^{(j,0)} p_{2n+j}^{(j,0)}(x) + \gamma_{2n+j}^{(j,0)} p_{2n+j-1}^{(j,0)}(x) 
+ \delta_{2n+j}^{(j,0)} p_{2n+j-2}^{(j,0)}(x), \]
use \eqref{shiftj} to find
\begin{equation}  \label{xp}
  xP_{n+j,n}(x) = P_{n+j+1,n}(x) + \beta_{2n+j}^{(j,0)} P_{n+j,n}(x) + \gamma_{2n+j}^{(j,0)} P_{n+j,n-1}(x) 
 + \delta_{2n+j}^{(j,0)} P_{n+j-1,n-1}(x).  
\end{equation}
Now use \eqref{c(j,0)} to replace $P_{n+j+1,n}(x)$ and (by changing $n$ to $n-1$) $P_{n+j,n-1}(x)$, to find
\begin{eqnarray}  \label{xp1}
 xP_{n+j,n}(x) &=& P_{n+j,n+1}(x) + \left(\beta_{2n+j}^{(j,0)} + c_n^{(j,0)}\right) P_{n+j,n}(x) +  \gamma_{2n+j}^{(j,0)} P_{n+j-1,n}(x) \nonumber \\
  & & +\ \left(\delta_{2n+j}^{(j,0)} + c_{n-1}^{(j,0)}\gamma_{2n+j}^{(j,0)}\right)  P_{n+j-1,n-1}(x).  
\end{eqnarray}
On the other hand, we take the recurrence relation   \eqref{step-line+j} for $j-1$
\[   xp_n^{(j-1,0)}(x) = p_{n+1}^{(j-1,0)}(x) + \beta_n^{(j-1,0)} p_n^{(j-1,0)}(x) + \gamma_n^{(j-1,0)} p_{n-1}^{(j-1,0)}(x) 
+ \delta_n^{(j-1,0)} p_{n-2}^{(j-1,0)}(x),  \]
replace $n$ by $2n+j$ and use  \eqref{shiftj} to find
\begin{eqnarray}  \label{xp2}
 xP_{n+j,n}(x) &=& P_{n+j,n+1}(x) + \beta_{2n+j}^{(j-1,0)} P_{n+j,n}(x) +  \gamma_{2n+j}^{(j-1,0)} P_{n+j-1,n}(x) \nonumber \\
   & & +\ \delta_{2n+j}^{(j-1,0)} P_{n+j-1,n-1}(x).  
\end{eqnarray}
Comparing \eqref{xp1} and \eqref{xp2} gives
\begin{multline*}
   \left(\beta_{2n+j}^{(j,0)} + c_n^{(j,0)}\right) P_{n+j,n}(x) +  \gamma_{2n+j}^{(j,0)} P_{n+j-1,n}(x) 
+ \left(\delta_{2n+j}^{(j,0)} + c_{n-1}^{(j,0)}\gamma_{2n+j}^{(j,0)}\right)  P_{n+j-1,n-1}(x) \\
  =  \beta_{2n+j}^{(j-1,0)} P_{n+j,n}(x) +  \gamma_{2n+j}^{(j-1,0)} P_{n+j-1,n}(x) 
+ \delta_{2n+j}^{(j-1,0)} P_{n+j-1,n-1}(x).
\end{multline*}
The polynomials $P_{n+j,n}, P_{n+j-1,n}, P_{n+j-1,n-1}$ are linearly independent (since they have degrees $2n+j, 2n+j-1$ and $2n+j-2$), hence
one finds
\begin{eqnarray*}  
  \beta_{2n+j}^{(j,0)} + c_n^{(j,0)} &=& \beta_{2n+j}^{(j-1,0)}, \\
  \gamma_{2n+j}^{(j,0)} &=& \gamma_{2n+j}^{(j-1,0)} , \\
  \delta_{2n+j}^{(j,0)} + c_{n-1}^{(j,0)}\gamma_{2n+j}^{(j,0)} & =& \delta_{2n+j}^{(j-1,0)},
\end{eqnarray*}
which gives the required relations \eqref{2n+j}.

In a similar way we start with the recurrence relation  \eqref{step-line+j} with $n$ replaced by $2n+j+1$
\[ xp_{2n+j+1}^{(j,0)}(x) = p_{2n+j+2}^{(j,0)}(x) + \beta_{2n+j+1}^{(j,0)} p_{2n+j+1}^{(j,0)}(x) + \gamma_{2n+j+1}^{(j,0)} p_{2n+j}^{(j,0)}(x) 
+ \delta_{2n+j+1}^{(j,0)} p_{2n+j-1}^{(j,0)}(x), \]
and use \eqref{shiftj} to find
\[  xP_{n+j+1,n}(x) = P_{n+j+1,n+1}(x) + \beta_{2n+j+1}^{(j,0)} P_{n+j+1,n}(x) + \gamma_{2n+j+1}^{(j,0)} P_{n+j,n}(x) 
 + \delta_{2n+j+1}^{(j,0)} P_{n+j,n-1}(x). \]
Use \eqref{c(j,0)} to replace $P_{n+j+1,n}(x)$ and $P_{n+j,n-1}(x)$, to find
\begin{eqnarray*}  
  x P_{n+j,n+1}(x) + c_n^{(j,0)} x P_{n+j,n}(x) &=& P_{n+j+1,n+1}(x) + \beta_{2n+j+1}^{(j,0)} P_{n+j,n+1}(x) \\
   & & +\ \left(\gamma_{2n+j+1}^{(j,0)} + c_n^{(j,0)} \beta_{2n+j+1}^{(j,0)}\right) P_{n+j,n}(x) \\
   & & +\  \delta_{2n+j+1}^{(j,0)} P_{n+j-1,n}(x)
   + c_{n-1}^{(j,0)} \delta_{2n+j+1}^{(j,0)} P_{n+j-1,n-1}(x) .
\end{eqnarray*}
Use \eqref{xp2} to replace $xP_{n+j,n}(x)$ to find
\begin{eqnarray}  \label{xp3}
x P_{n+j,n+1}(x) &=&  P_{n+j+1,n+1}(x) + \left(\beta_{2n+j+1}^{(j,0)} - c_n^{(j,0)} \right)  P_{n+j,n+1}(x) \nonumber \\
   & & +\ \left(\gamma_{2n+j+1}^{(j,0)} + c_n^{(j,0)} \beta_{2n+j+1}^{(j,0)} - c_n^{(j,0)} \beta_{2n+j}^{(j-1,0)} \right) P_{n+j,n}(x) \nonumber \\
   & & +\  \left( \delta_{2n+j+1}^{(j,0)} - c_n^{(j,0)} \gamma_{2n+j}^{(j-1,0)} \right) P_{n+j-1,n}(x) \nonumber \\
   & & +\ \left( c_{n-1}^{(j,0)} \delta_{2n+j+1}^{(j,0)} - c_n^{(j,0)} \delta_{2n+j}^{(j-1,0)} \right) P_{n+j-1,n-1}(x) .
\end{eqnarray}
On the other hand, we take the recurrence relation \eqref{step-line+j} for $j-1$ with $n$ replaced by $2n+j+1$ and use
\eqref{shiftj} to find
\begin{eqnarray}  \label{xp4}
  xP_{n+j,n+1}(x) &=& P_{n+j+1,n+1}(x) + \beta_{2n+j+1}^{(j-1,0)} P_{n+j,n+1}(x) \nonumber \\
                  & & +\ \gamma_{2n+j+1}^{(j-1,0)} P_{n+j,n}(x) + \delta_{2n+j+1}^{(j-1,0)} P_{n+j-1,n}(x).
\end{eqnarray}
Comparing \eqref{xp3} and \eqref{xp4} then gives
\begin{multline*} 
 \left(\beta_{2n+j+1}^{(j,0)} - c_n^{(j,0)} \right)  P_{n+j,n+1}(x) + \left(\gamma_{2n+j+1}^{(j,0)} + c_n^{(j,0)} \beta_{2n+j+1}^{(j,0)} - c_n^{(j,0)} \beta_{2n+j}^{(j-1,0)} \right) P_{n+j,n}(x) \\  
  +  \left( \delta_{2n+j+1}^{(j,0)} - c_n^{(j,0)} \gamma_{2n+j}^{(j-1,0)} \right) P_{n+j-1,n}(x)
 + \left( c_{n-1}^{(j,0)} \delta_{2n+j+1}^{(j,0)} - c_n^{(j,0)} \delta_{2n+j}^{(j-1,0)} \right) P_{n+j-1,n-1}(x) \\
  = \beta_{2n+j+1}^{(j-1,0)} P_{n+j,n+1}(x) + \gamma_{2n+j+1}^{(j-1,0)} P_{n+j,n}(x) + \delta_{2n+j+1}^{(j-1,0)} P_{n+j-1,n}(x) .
\end{multline*}
The four polynomials $P_{n+j,n+1}, P_{n+j,n}, P_{n+j-1,n}, P_{n+j-1,n-1}$ are linearly independent, hence one finds
\begin{eqnarray}  \label{extra}
  \beta_{2n+j+1}^{(j,0)} - c_n^{(j,0)} & = & \beta_{2n+j+1}^{(j-1,0)}, \nonumber \\
  \gamma_{2n+j+1}^{(j,0)} + c_n^{(j,0)} \beta_{2n+j+1}^{(j,0)} - c_n^{(j,0)} \beta_{2n+j}^{(j-1,0)} & = & \gamma_{2n+j+1}^{(j-1,0)}, \nonumber \\
  \delta_{2n+j+1}^{(j,0)} - c_n^{(j,0)} \gamma_{2n+j}^{(j-1,0)} & =& \delta_{2n+j+1}^{(j-1,0)}, \nonumber \\
  c_{n-1}^{(j,0)} \delta_{2n+j+1}^{(j,0)} - c_n^{(j,0)} \delta_{2n+j}^{(j-1,0)} & = & 0. 
\end{eqnarray}
The first three relations give \eqref{2n+j+1} and the last equation gives \eqref{cn(j,0)-Riccati} if we replace $\delta_{2n+j+1}^{(j,0)}$
by the third equation in \eqref{2n+j+1}. For $n=0$ the relation \eqref{xp3} becomes
\begin{eqnarray*}  
x P_{j,1}(x) &=&  P_{j+1,1}(x) + \left(\beta_{j+1}^{(j,0)} - c_0^{(j,0)} \right)  P_{j,1}(x) \nonumber \\
   & & +\ \left(\gamma_{j+1}^{(j,0)} + c_0^{(j,0)} \beta_{j+1}^{(j,0)} - c_0^{(j,0)} \beta_{j}^{(j-1,0)} \right) P_{j,0}(x) \nonumber \\
   & &  -\  c_0^{(j,0)} \gamma_{j}^{(j-1,0)}  P_{j-1,0}(x) ,
\end{eqnarray*}
and if we compare the coefficient of $P_{j-1,0}(x)$ with the corresponding coefficient in \eqref{xp4} when $n=0$, then
\[   - c_0^{(j,0)} \gamma_{j}^{(j-1,0)} = \delta_{j+1}^{(j-1,0)}, \]
which gives \eqref{c0(j,0)}.
\end{proof}

The Riccati equation \eqref{cn(j,0)-Riccati} can be solved explicitly if all the step-line coefficients at shift $j-1$ are known.
The substitution $c_n^{(j,0)} = 1/d_n^{(j,0)}$ gives
\[    d_n^{(j,0)} = \frac{d_{n-1}^{(j,0)} \delta_{2n+j}^{(j-1,0)} - \gamma_{2n+j}^{(j-1,0)}}{\delta_{2n+j+1}^{(j-1,0)}}, \]
which is a first order linear recurrence relation. Its solution is
\begin{equation}  \label{d(j,0)}
d_n ^{(j,0)}  = \left( \sum_{i=1}^{n} \frac{\gamma _{2i+j} ^{(j-1,0)}} { \delta _{2i+j+1} ^{(j-1,0)} } \prod _{k=1}^{i} \frac {\delta _{2k+j+1}^{(j-1,0)}} {\delta _{2k+j}^{(j-1,0)}} + d_0^{(j,0)} \right) \prod _{k=1}^{n} \frac {\delta _{2k+j}^{(j-1,0)}}{\delta _{2k+j+1}^{(j-1,0)}} .
\end{equation}
However, this is only useful if one has explicit expressions for the step-line coefficients. 

An algorithm for computing the recurrence coefficients for the shifted step-line is as follows. Assume that all the recurrence coefficients
$\beta_n^{(j-1,0)},\gamma_n^{(j-1,0)}, \delta_n^{(j-1,0)}$ are known, then one first computes the auxiliary sequence $(c_n^{(j,0)})_{n \geq 0}$
recursively by using \eqref{cn(j,0)-Riccati}. Once this is done, one uses the relations \eqref{2n+j} and \eqref{2n+j+1} to get the
recurrence coefficients for the shifted step-line with shift $j$. The following Maple procedure computes
$(\beta_n^{(j,0)})_{j \leq n \leq 2N-j+1}, (\gamma_n^{(j,0)})_{j+1 \leq n \leq 2N-j+1}$ and $(\delta_n^{(j,0)})_{j+2 \leq n \leq 2N-j+1}$ for $1 \leq j \leq J$. 

\begin{verbatim}
shift_j:=proc(N,J) 
  local n,j; 
  for n from 0 to 2*N+J+1 do 
    b[n](0):=beta0(n); 
    g[n](0):=gamma0(n); 
    d[n](0):=delta0(n); 
  end do; 
  for j from 1 to J do 
    g[j](j):=0; 
    d[j](j):=0; 
    d[j+1](j):=0; 
    c[0](j):=-d[j+1](j-1)/g[j](j-1); 
    b[j](j):=b[j](j-1)-c[0](j); 
    b[j+1](j):=b[j+1](j-1)+c[0](j);  
    g[j+1](j):=g[j+1](j-1)+c[0](j)*(b[j](j-1)-b[j+1](j)); 
    for n from 1 to N-j do 
      c[n](j):=c[n-1](j)*d[2*n+j+1](j-1)/(d[2*n+j](j-1)-c[n-1](j)*g[2*n+j](j-1)); 
      b[2*n+j](j):=b[2*n+j](j-1)-c[n](j); 
      g[2*n+j](j):=g[2*n+j](j-1); 
      d[2*n+j](j):=d[2*n+j](j-1)-c[n-1](j)*g[2*n+j](j-1); 
      b[2*n+j+1](j):=b[2*n+j+1](j-1)+c[n](j); 
      g[2*n+j+1](j):=g[2*n+j+1](j-1)+c[n](j)*(b[2*n+j](j-1)-b[2*n+j+1](j)); 
      d[2*n+j+1](j):=d[2*n+j+1](j-1)+c[n](j)*g[2*n+j](j-1); 
    end do; 
  end do; 
end proc;
\end{verbatim}

\noindent\textbf{Remark:}
Observe that \eqref{cn(j,0)-Riccati} and the third equation in \eqref{2n+j} give
\[   c_n^{(j,0)} =c_{n-1}^{(j,0)} \frac{\delta_{2n+j+1}^{(j-1,0)}}{\delta_{2n+j}^{(j,0)}},  \]
and \eqref{extra} gives
\[   c_n^{(j,0)} =c_{n-1}^{(j,0)} \frac{\delta_{2n+j+1}^{(j,0)}}{\delta_{2n+j}^{(j-1,0)}}.  \]
Comparing both expressions gives
\[   \delta_{2n+j+1}^{(j,0)} = \frac{\delta_{2n+j+1}^{(j-1,0)}\delta_{2n+j}^{(j-1,0)}}{\delta_{2n+j}^{(j,0)}}.  \]
Hence one may therefore replace lines 16--24 in the Maple procedure by

\begin{verbatim}
    for n from 1 to N-j do 
      d[2*n+j](j):=d[2*n+j](j-1)-c[n-1](j)*g[2*n+j](j-1);
      d[2*n+j+1](j):=d[2*n+j+1](j-1)*d[2*n+j](j-1)/d[2*n+j](j);        
      c[n](j):=c[n-1](j)*d[2*n+j+1](j-1)/(d[2*n+j](j); 
      b[2*n+j](j):=b[2*n+j](j-1)-c[n](j); 
      g[2*n+j](j):=g[2*n+j](j-1); 
      b[2*n+j+1](j):=b[2*n+j+1](j-1)+c[n](j); 
      g[2*n+j+1](j):=g[2*n+j+1](j-1)+c[n](j)*(b[2*n+j](j-1)-b[2*n+j+1](j)); 
    end do; 
\end{verbatim}

The formulas in Theorem \ref{thm:(j,0)} and the computations in the algorithm hold provided $\delta_{2n+j}^{(j,0)} \neq 0$. 
If $\delta_{2n+j}^{(j,0)} = 0$ then we cannot compute $c_n^{(j,0)}$ and this quantity is needed in most of the other formulas
for the $j$-shifted step-line. The condition $\delta_{2n+j}^{(j,0)} \neq 0$ for all $j \geq 1$ and $n \geq 1$ is a sufficient condition
that implies that all the required recurrence coefficients can be computed and hence implies that the multi-indices $(n+k,n)$ are
normal.
\medskip

There is a similar result for the recurrence coefficients $(\beta_n^{(0,k)},\gamma_n^{(0,k)},\delta_n^{(0,k)})$.

\begin{theorem}  \label{thm:(0,k)}
One has for all $k \geq 0$
\begin{eqnarray*}
   \beta_{2n+k}^{(0,k+1)} &=&  \beta_{2n+k}^{(0,k)} + c_n^{(0,k)}, \qquad n \geq 0, \\
   \gamma_{2n+k}^{(0,k+1)} &=& \gamma_{2n+k}^{(0,k)}, \qquad\qquad n \geq 1,  \\
   \delta_{2n+k}^{(0,k+1)} &=& \delta_{2n+k}^{(0,k)} + c_{n-1}^{(0,k)} \gamma_{2n+k}^{(0,k)}, \qquad n \geq 1.
\end{eqnarray*}
and
\begin{eqnarray*}
   \beta_{2n+k+1}^{(0,k+1)} &=&  \beta_{2n+k+1}^{(0,k)} - c_n^{(0,k)}, \qquad n \geq 0, \\
   \gamma_{2n+k+1}^{(0,k+1)} &=& \gamma_{2n+k+1}^{(0,k)} - c_n^{(0,k)} (\beta_{2n+k}^{(0,k)} - \beta_{2n+k+1}^{(0,k+1)}), \qquad n \geq 0, \\
   \delta_{2n+k+1}^{(0,k+1)} &=& \delta_{2n+k+1}^{(0,k)} - c_n^{(0,k)} \gamma_{2n+k}^{(0,k)}, \qquad n \geq 1, 
\end{eqnarray*}
where $(c_n^{(0,k)})_{n \geq 0}$ is the solution of the Riccati type difference equation
\begin{equation}   \label{cn(0,k)-Riccati}
   c_n^{(0,k)} = \frac{c_{n-1}^{(0,k)}\delta_{2n+k+1}^{(0,k)}}{\delta_{2n+k}^{(0,k)} + c_{n-1}^{(0,k)} \gamma_{2n+k}^{(0,k)}}, 
\end{equation}
with initial condition
\begin{equation}   \label{c0(0,k)}
    c_0^{(0,k)} =  \frac{\delta_{k+1}^{(0,k)}}{\gamma_k^{(0,k)}} , \qquad k \geq 1, 
\end{equation}
and for $k=0$ the $c_0^{(0,0)}$ is a free parameter.
\end{theorem}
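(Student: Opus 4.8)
The plan is to mirror the proof of Theorem~\ref{thm:(j,0)}, now shifting in the direction $\vec{e}_2=(0,1)$ rather than $\vec{e}_1$. The lower shift is $k$ and the higher shift is $k+1$, and the bridge between the two families is \eqref{c(0,k)}, which I read both as $P_{n+1,n+k}=P_{n,n+k+1}+c_n^{(0,k)}P_{n,n+k}$ and as $P_{n,n+k+1}=P_{n+1,n+k}-c_n^{(0,k)}P_{n,n+k}$. The one structural difference from the $(j,0)$ case is that here $c_n^{(0,k)}$ records the difference (shift $k$) $-$ (shift $k+1$), whereas in \eqref{c(j,0)} it recorded (shift $j$) $-$ (shift $j-1$); this interchange of the roles of the two shifts is exactly what flips the signs relative to \eqref{2n+j}--\eqref{2n+j+1}. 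Throughout I use that multiple orthogonal polynomials of distinct degrees are linearly independent.

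First I would establish the first block of identities. Taking \eqref{step-line+k} at shift $k$ and index $2n+k$ and translating through \eqref{shiftk} gives $xP_{n,n+k}$ as a combination of shift-$k$ polynomials; eliminating $P_{n+1,n+k}$ and $P_{n,n+k-1}$ by \eqref{c(0,k)} (at $n$ and at $n-1$) rewrites it as a combination of $P_{n,n+k+1},P_{n,n+k},P_{n-1,n+k},P_{n-1,n+k-1}$ with coefficients $1$, $\beta_{2n+k}^{(0,k)}+c_n^{(0,k)}$, $\gamma_{2n+k}^{(0,k)}$ and $\delta_{2n+k}^{(0,k)}+c_{n-1}^{(0,k)}\gamma_{2n+k}^{(0,k)}$. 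On the other hand, for $n\geq1$ the polynomial $P_{n,n+k}$ is the odd-type shift-$(k+1)$ polynomial of index $2n+k$, so \eqref{step-line+k} at shift $k+1$ expands the same $xP_{n,n+k}$ with coefficients $1,\beta_{2n+k}^{(0,k+1)},\gamma_{2n+k}^{(0,k+1)},\delta_{2n+k}^{(0,k+1)}$ against the same four polynomials. After cancelling the common leading term $P_{n,n+k+1}$, the three polynomials $P_{n,n+k},P_{n-1,n+k},P_{n-1,n+k-1}$ have distinct degrees $2n+k,2n+k-1,2n+k-2$, and matching coefficients yields the first three relations.

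Next I would repeat the argument one index higher. Starting from \eqref{step-line+k} at index $2n+k+1$, which translates to an expansion of $xP_{n+1,n+k}$, I use \eqref{c(0,k)} to replace $P_{n+1,n+k}$ (it occurs on both sides) and $P_{n,n+k-1}$, and then substitute the shift-$(k+1)$ expansion of $xP_{n,n+k}$ just obtained. This produces an expansion of $xP_{n,n+k+1}$ in the five polynomials $P_{n+1,n+k+1},P_{n,n+k+1},P_{n,n+k},P_{n-1,n+k},P_{n-1,n+k-1}$, which I compare with the direct shift-$(k+1)$ recurrence at index $2n+k+1$. Linear independence gives four scalar equations: the first three are the second block (using the $\beta$-relations to recast the $\gamma$-equation into the stated form), and the coefficient of the lowest polynomial $P_{n-1,n+k-1}$ gives $c_{n-1}^{(0,k)}\delta_{2n+k+1}^{(0,k)}=c_n^{(0,k)}\delta_{2n+k}^{(0,k+1)}$; inserting the first-block expression for $\delta_{2n+k}^{(0,k+1)}$ turns this into the Riccati equation \eqref{cn(0,k)-Riccati}.

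The part requiring the most care is the boundary $n=0$, which also produces the initial condition and the exceptional case. Here the shift-$(k+1)$ expansion of $xP_{n,n+k}$ is unavailable (it needs $n\geq1$), so instead I expand $xP_{0,k}$ by the shift-$k$ recurrence at its bottom index $k$, where extending the family to the formal value $n=-1$ gives $p_{k-1}^{(0,k)}=P_{0,k-1}$ and $p_{k-2}^{(0,k)}=P_{-1,k-1}=0$. Running the same elimination and comparing the resulting expansion of $xP_{0,k+1}$ with the direct shift-$(k+1)$ recurrence at index $k+1$ --- for which the extended family gives $p_k^{(0,k+1)}=P_{0,k}$ and $p_{k-1}^{(0,k+1)}=P_{-1,k}=0$ --- the coefficient of the lowest-degree polynomial $P_{0,k-1}$ must agree. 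For $k\geq1$ this polynomial is genuinely present and carries coefficient $0$ on the shift-$(k+1)$ side, so $\delta_{k+1}^{(0,k)}-c_0^{(0,k)}\gamma_k^{(0,k)}=0$, which is \eqref{c0(0,k)}. For $k=0$ one has $P_{0,-1}=0$, the comparison is vacuous, and no equation constrains $c_0^{(0,0)}=\kappa_{0,0}$; it is therefore free, in agreement with the earlier observation that the second measure is determined only up to the convex-combination freedom.
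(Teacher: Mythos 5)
Your proposal is correct and is exactly the argument the paper intends: its proof of Theorem~\ref{thm:(0,k)} consists of the single remark that one mirrors the proof of Theorem~\ref{thm:(j,0)} using \eqref{step-line+k}, \eqref{shiftk} and \eqref{c(0,k)}, which is precisely what you carry out. Your elaborations of the details the paper leaves implicit --- the sign flip coming from $c_n^{(0,k)}$ measuring (shift $k$) $-$ (shift $k+1$), the recast of the $\gamma$-equation via the $\beta$-relations, the $n=0$ comparison at the coefficient of $P_{0,k-1}$ yielding \eqref{c0(0,k)} for $k\geq 1$, and its vacuity at $k=0$ (matching the convex-combination freedom in $\mu_2$, so $c_0^{(0,0)}=\kappa_{0,0}$ is free) --- all check out against the stated formulas and the paper's initial conditions $\gamma_k^{(0,k+1)}=\delta_k^{(0,k+1)}=\delta_{k+1}^{(0,k+1)}=0$.
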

\begin{proof}
The proof is very similar to the proof of Theorem \ref{thm:(j,0)}, but one uses the recurrence relation \eqref{step-line+k} and
the relations \eqref{shiftk} and \eqref{c(0,k)}.
\end{proof}

An important difference is that one also needs $c_0^{(0,0)}$ which one can find by taking $n=0$ and $k=0$ in the relation
for $\beta_{2n+k}^{(0,k+1)}$, giving
\[  c_0^{(0,0)} = \beta_0^{(0,1)} - \beta_0^{(0,0)}. \]
Here $\beta_0^{(0,0)} = \beta_0$ is known as the first of the step-line recurrence coefficients, but $\beta_0^{(0,1)}$ can not
be obtained in terms of the step-line recurrence coefficients. Recall that the step-line recurrence coefficients do not determine the measures $\mu_1$ and $\mu_2$ but only determine $\mu_1$ and for the second measure any convex combination of $\mu_1$ and $\mu_2$ is possible. This degree of freedom
is reflected in $c_0^{(0,0)}$ being a free parameter. 

Again, the Riccati equation \eqref{cn(0,k)-Riccati} can be solved explicitly. The substitution $c_n^{(0,k)} = 1/d_n^{(0,k)}$ gives
\[  d_n^{(0,k)} = \frac{d_{n-1}^{(0,k)} \delta_{2n+k}^{(0,k)} + \gamma_{2n+k}^{(0,k)}}{\delta_{2n+k+1}^{(0,k)}}, \]
and this first order linear recurrence has the following expression as solution
\begin{equation}  \label{d(0,k)}
d_n ^{(0,k)}  = \left( \sum_{i=1}^{n} \frac{\gamma _{2i+k} ^{(0,k)}} { \delta _{2i+k+1} ^{(0,k)} } \prod _{l=1}^{i} \frac {\delta _{2l+k+1}^{(0,k)}} {\delta _{2l+k}^{(0,k)}} + d_0^{(0,k)} \right) \prod _{l=1}^{n} \frac {\delta _{2l+k}^{(0,k)}}{\delta _{2l+k+1}^{(0,k)}} .
\end{equation}

The following Maple procedure computes $(\beta_n^{(0,k+1)})_{k \leq n \leq 2N-k+1}, (\gamma_n^{(0,k+1)})_{k+1 \leq n \leq 2N-k+1}$ and
$(\delta_n^{(0,k+1)})_{k+2 \leq n \leq 2N-k+1}$ for $0 \leq k \leq K$. It requires as extra input the first two moments of the measure $\mu_2$, i.e.,
$m(1,2)=m_1(\mu_2)$ and $m(0,2)=m_0(\mu_2)$. 

\begin{verbatim}
shift_k:=proc(N,K) 
  local n,k; 
  for n from 0 to 2*N+K+1 do 
    b[n](0) := beta0(n); 
    g[n](0) := gamma0(n); 
    d[n](0) := delta0(n); 
  end do; 
  c[0](0) := m(1,2)/m(0,2)-b[0](0); 
  for k from 0 to K do 
    g[k](k+1):=0; 
    d[k](k+1):=0; 
    d[k+1](k+1):=0; 
    b[k](k+1):=b[k](k)+c[0](k); 
    b[k+1](k+1):=b[k+1](k)-c[0](k);  
    g[k+1](k+1):=g[k+1](k)-c[0](k)*(b[k](k)-b[k+1](k+1)); 
    for n from 1 to N-k do 
      c[n](k):=c[n-1](k)*d[2*n+k+1](k)/(d[2*n+k](k)+c[n-1](k)*g[2*n+k](k)); 
      b[2*n+k](k+1):=b[2*n+k](k)+c[n](k); 
      g[2*n+k](k+1):=g[2*n+k](k); 
      d[2*n+k](k+1):=d[2*n+k](k)+c[n-1](k)*g[2*n+k](k); 
      b[2*n+k+1](k+1):=b[2*n+k+1](k)-c[n](k); 
      g[2*n+k+1](k+1):=g[2*n+k+1](k)-c[n](k)*(b[2*n+k](k)-b[2*n+k+1](k+1)); 
      d[2*n+k+1](k+1):=d[2*n+k+1](k)-c[n](k)*g[2*n+k](k); 
    end do; 
    c[0](k+1):=d[k+2](k+1)/g[k+1](k+1); 
  end do; 
end proc; 
\end{verbatim}

\noindent\textbf{Remark:}
One has
\[   c_n^{(0,k)} = c_{n-1}^{(0,k)} \frac{\delta_{2n+k+1}^{(0,k)}}{\delta_{2n+k}^{(0,k+1)}}, \]
and in a way similar as before one has
\[   \delta_{2n+k+1}^{(0,k+1)} = \frac{\delta_{2n+k+1}^{(0,k)}\delta_{2n+k}^{(0,k)}}{\delta_{2n+k}^{(0,k+1)}}. \]
Hence one can replace the lines 16--24 in the above procedure by

\begin{verbatim}
    for n from 1 to N-k do 
      d[2*n+k](k+1):=d[2*n+k](k)+c[n-1](k)*g[2*n+k](k);
      d[2*n+k+1](k+1):=d[2*n+k+1](k)*d[2*n+k](k)/d[2*n+k](k+1);
      c[n](k):=c[n-1](k)*d[2*n+k+1](k)/d[2*n+k](k+1); 
      b[2*n+k](k+1):=b[2*n+k](k)+c[n](k); 
      g[2*n+k](k+1):=g[2*n+k](k); 
      b[2*n+k+1](k+1):=b[2*n+k+1](k)-c[n](k); 
      g[2*n+k+1](k+1):=g[2*n+k+1](k)-c[n](k)*(b[2*n+k](k)-b[2*n+k+1](k+1));   
    end do; 
\end{verbatim}

A sufficient condition for normality of the multi-indices $(n,n+k)$ is now that $\delta_{2n+k}^{(0,k+1)} \neq 0$ for all $n \geq 1$ and $k \geq 0$.

\subsection{From shifted step-line to nearest neighbor recurrence coefficients}
Our next step is to find the nearest neighbor recurrence coefficients $a_{n,m},b_{n,m},c_{n,m},d_{n,m}$ in \eqref{NN1}--\eqref{NN2}
from the recurrence coefficients on the shifted step-lines. 

\begin{theorem}  \label{thm:NN}
The coefficients of the nearest neighbor recurrence relations \eqref{NN1}--\eqref{NN2} are for $n\geq 0$ and $j \geq 1$ given by
\begin{eqnarray*}  \label{down}
  c_{n+j,n} &=& \beta_{2n+j}^{(j,0)}, \\
  d_{n+j,n} &=& c_{n+j,n} + c_n^{(j,0)}, \\
  a_{n+j,n} &=& - \frac{\delta_{2n+j+1}^{(j-1,0)}}{c_n^{(j,0)}}, \\
  b_{n+j,n} &=& \gamma_{2n+j}^{(j,0)} - a_{n+j,n} .
\end{eqnarray*}
and for $n\geq 1$ and $k \geq 0$
\begin{eqnarray*}
   c_{n,n+k} &=& \beta_{2n+k}^{(0,k)}, \\
   d_{n,n+k} &=& c_{n,n+k} + c_n^{(0,k)}, \\
   a_{n,n+k} &=& - \frac{\delta_{2n+k}^{(0,k)}}{c_{n-1}^{(0,k)}}, \\
   b_{n,n+k} &=& \gamma_{2n+k}^{(0,k)} - a_{n,n+k}.
\end{eqnarray*}
The initial coefficients are $a_{0,0}=b_{0,0}=0$, $c_{0,0} = \beta_0$ and $d_{0,0}$ is a free parameter.
\end{theorem}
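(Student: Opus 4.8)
The plan is to obtain each nearest neighbor coefficient by rewriting the shifted step-line recurrence in the basis of the genuine nearest neighbors of the multi-index in question and then comparing coefficients with \eqref{NN1} and \eqref{NN2}, exactly in the spirit of the proof of Theorem \ref{thm:(j,0)}. I will carry out the $(j,0)$ family in detail; the $(0,k)$ family is completely parallel, with \eqref{step-line+k}, \eqref{shiftk} and \eqref{c(0,k)} replacing their $(j,0)$ counterparts.

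For $n\geq 1$ I would begin from equation \eqref{xp}, which is \eqref{step-line+j} at index $2n+j$ rewritten through \eqref{shiftj}:
\[ xP_{n+j,n} = P_{n+j+1,n} + \beta_{2n+j}^{(j,0)} P_{n+j,n} + \gamma_{2n+j}^{(j,0)} P_{n+j,n-1} + \delta_{2n+j}^{(j,0)} P_{n+j-1,n-1}. \]
This already has the shape of \eqref{NN1} at the multi-index $(n+j,n)$ with $k=1$, except that the last term is the \emph{diagonal} neighbor $P_{n+j-1,n-1}$ instead of the nearest neighbor $P_{n+j-1,n}$. The decisive step is to clear the diagonal term with \eqref{c(j,0)} at $n-1$, that is $P_{n+j,n-1}-P_{n+j-1,n}=c_{n-1}^{(j,0)}P_{n+j-1,n-1}$, which expresses $P_{n+j-1,n-1}$ through the true neighbors $P_{n+j,n-1}$ and $P_{n+j-1,n}$.

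After this substitution the right-hand side is supported on $P_{n+j+1,n}, P_{n+j,n}, P_{n+j,n-1}, P_{n+j-1,n}$, which I would show to be linearly independent: the first two are singled out by their degrees $2n+j+1$ and $2n+j$, and although the last two both have degree $2n+j-1$, their difference is the nonzero multiple $c_{n-1}^{(j,0)}P_{n+j-1,n-1}$ of a lower degree polynomial, so all four are independent (this is where the normality assumption $c_{n-1}^{(j,0)}\neq 0$ is used). Comparing with \eqref{NN1} then gives $c_{n+j,n}=\beta_{2n+j}^{(j,0)}$, $a_{n+j,n}=-\delta_{2n+j}^{(j,0)}/c_{n-1}^{(j,0)}$ and $b_{n+j,n}=\gamma_{2n+j}^{(j,0)}-a_{n+j,n}$; substituting the identity $c_n^{(j,0)}=c_{n-1}^{(j,0)}\delta_{2n+j+1}^{(j-1,0)}/\delta_{2n+j}^{(j,0)}$ from the Remark after Theorem \ref{thm:(j,0)} recasts $a_{n+j,n}$ in the stated form $-\delta_{2n+j+1}^{(j-1,0)}/c_n^{(j,0)}$. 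For $d_{n+j,n}$ I would run the same argument on the already-derived equation \eqref{xp1}, which displays the $\vec{e}_2$-neighbor $P_{n+j,n+1}$ and so has the shape of \eqref{NN2}; the identical substitution and comparison produce $d_{n+j,n}=\beta_{2n+j}^{(j,0)}+c_n^{(j,0)}=c_{n+j,n}+c_n^{(j,0)}$ together with the same $a_{n+j,n}$ and $b_{n+j,n}$, confirming the compatibility of \eqref{NN1} and \eqref{NN2}.

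The boundary data are then treated separately. At $n=0$ in the $(j,0)$ family the diagonal polynomial $P_{j-1,-1}$ vanishes and the relation at $(j,0)$ reduces to the three term recurrence of the single measure $\mu_1$; together with the initial value $c_0^{(j,0)}=-\delta_{j+1}^{(j-1,0)}/\gamma_j^{(j-1,0)}$ of \eqref{c0(j,0)} and the conventions $P_{\cdot,-1}=P_{-1,\cdot}=0$ this reproduces the formulas at $n=0$. The coefficients at $(0,0)$ are immediate: $a_{0,0}=b_{0,0}=0$ since $P_{-1,0}=P_{0,-1}=0$, $c_{0,0}=\beta_0$ from $xP_{0,0}=P_{1,0}+c_{0,0}P_{0,0}$, and $d_{0,0}$ inherits the free parameter $c_0^{(0,0)}$ of Theorem \ref{thm:(0,k)}. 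I expect the main obstacle to lie precisely in the diagonal-to-nearest-neighbor substitution and in verifying the linear independence of the two polynomials that share degree $2n+j-1$; once those are in place the coefficient identification, and the passage between the two equivalent expressions for $a_{n+j,n}$, are routine.
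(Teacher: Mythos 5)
Your proposal is correct and follows essentially the same route as the paper: start from the shifted step-line recurrence \eqref{xp} (resp.\ \eqref{xp0}), use \eqref{c(j,0)} (resp.\ \eqref{c(0,k)}) at index $n-1$ to trade the diagonal term $P_{n+j-1,n-1}$ for the true neighbors, compare coefficients with \eqref{NN1}, and invoke the identity $c_{n-1}^{(j,0)}\delta_{2n+j+1}^{(j-1,0)}=c_n^{(j,0)}\delta_{2n+j}^{(j,0)}$ to recast $a_{n+j,n}$ in a form valid down to $n=0$. The only deviations are cosmetic: the paper gets $d_{n+j,n}$ in one line from \eqref{kappa} rather than by your second comparison against \eqref{NN2} via \eqref{xp1} (which is a valid, slightly longer consistency check), and it leaves implicit the linear-independence of $P_{n+j-1,n}$ and $P_{n+j,n-1}$ that you rightly make explicit.
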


Note that if $b_0(\mu_1)=m_1(\mu_1)/m_0(\mu_1)$ and $b_0(\mu_2)=m_1(\mu_2)/m_0(\mu_2)$ are the first recurrence coefficients
of the orthogonal polynomials for $\mu_1$ and $\mu_2$ respectively, then $c_{0,0} = b_0(\mu_1)$ and $d_{0,0}=b_0(\mu_2)$.
Hence the particular choice $d_{0,0}=m_1(\mu_2)/m_0(\mu_2)$ gives the nearest neighbor recurrence coefficients of the
multiple orthogonal polynomials for the measures $\mu_1$ and $\mu_2$.
Another choice of $d_{0,0}$ still gives $\mu_1$ as the first measure, but a linear combination of $\mu_1$ and $\mu_2$ for the second measure.

\begin{proof}
We start from \eqref{xp} and replace the polynomial $P_{n+j-1,n-1}$ by using \eqref{c(j,0)} to find
\begin{multline*}
   xP_{n+j,n}(x) = P_{n+j+1,n}(x) + \beta_{2n+j}^{(j,0)} P_{n+j,n}(x) + \gamma_{2n+j}^{(j,0)} P_{n+j,n-1}(x)  \\
 +\ \frac{\delta_{2n+j}^{(j,0)}}{c_{n-1}^{(j,0)}}  \left( P_{n+j,n-1}(x)-P_{n+j-1,n}(x) \right). 
\end{multline*}
Now we compare this with the recurrence relation \eqref{NN1} with $n$ replaced by $n+j$ and $m=n$, then
\begin{eqnarray*}
       c_{n+j,n} &=& \beta_{2n+j}^{(j,0)}, \\
       b_{n+j,n} &=& \gamma_{2n+j}^{(j,0)} + \frac{\delta_{2n+j}^{(j,0)}}{c_{n-1}^{(j,0)}}, \\
       a_{n+j,n} &=& - \frac{\delta_{2n+j}^{(j,0)}}{c_{n-1}^{(j,0)}}.
\end{eqnarray*}
Use the last equation in (\ref{extra}) to obtain 
\[ a_{n+j,n} = - \frac{\delta_{2n+j+1} ^{(j-1,0)} }{ c_n ^{(j,0)} }, \]
so that the formula is valid for all $n\geq 0$. For $d_{n+j,n}$ we use the relation \eqref{kappa} to find
\[  d_{n+j,n} = c_{n+j,n} + c_n^{(j,0)}. \]
 This gives the first part of the theorem.   

For the second part of the theorem we use \eqref{step-line+k} with $n$ replaced by $2n+k$ and by \eqref{shiftk} this gives
\begin{equation}  \label{xp0}
  xP_{n,n+k}(x) = P_{n+1,n+k}(x) + \beta_{2n+k}^{(0,k)} P_{n,n+k}(x) + \gamma_{2n+k}^{(0,k)} P_{n,n+k-1}(x) 
+ \delta_{2n+k}^{(0,k)} P_{n-1,n+k-1}(x). 
\end{equation}
Replace the polynomial $P_{n-1,n+k-1}$ by using \eqref{c(0,k)} (but with $n$ replaced by $n-1$) to find
\begin{multline*}
  xP_{n,n+k}(x) = P_{n+1,n+k}(x) + \beta_{2n+k}^{(0,k)} P_{n,n+k}(x) + \gamma_{2n+k}^{(0,k)} P_{n,n+k-1}(x) \\
  +\   \frac{\delta_{2n+k}^{(0,k)}}{c_{n-1}^{(0,k)}} \left( P_{n,n+k-1}(x) - P_{n-1,n+k}(x) \right).
\end{multline*}
Comparing with \eqref{NN1} with $m=n+k$ then gives 
\begin{eqnarray*}
        c_{n,n+k} &=& \beta_{2n+k}^{(0,k)}, \\
        a_{n,n+k} &=& - \frac{\delta_{2n+k}^{(0,k)}}{c_{n-1}^{(0,k)}}, \\
        b_{n,n+k} &=& \gamma_{2n+k}^{(0,k)} + \frac{\delta_{2n+k}^{(0,k)}}{c_{n-1}^{(0,k)}} .
\end{eqnarray*}
For $d_{n,n+k}$ we use \eqref{kappa} to find
\[   d_{n,n+k} = c_{n,n+k} + c_n^{(0,k)}. \]
This gives the second part of the theorem. 
\end{proof}

In Maple one can compute the nearest neighbor recurrence coefficients using the following procedure:

\begin{verbatim}
nncoef:=proc(n,m) 
   local j,k; 
   if m<n then j:=n-m; 
             shift_j(n,j);  
             c(n,m):= b[2*m+j](j); 
             d(n,m):= c(n,m)+c[m](j); 
             a(n,m):= -d[2*m+j+1](j-1)/c[m](j); 
             b(n,m):= g[2*m+j](j)-a(n,m); 
          else k:=m-n; 
             shift_k(m,k); 
             c(n,m):= b[2*n+k](k); 
             d(n,m):= c(n,m)+c[n](k); 
             a(n,m):= -d[2*n+k](k)/c[n-1](k); 
             b(n,m):= g[2*n+k](k)-a(n,m); 
   end if; 
   Vector([c(n,m), d(n,m), a(n,m), b(n,m)]); 
end proc;
\end{verbatim}

\subsection{The recurrence coefficients of the marginal measures}
Now that we know the nearest neighbor recurrence coefficients, we can find the recurrence coefficients of the orthogonal polynomials
$P_n(x;\mu_1)$ for the measure $\mu_1$ and $P_n(x;\mu_2)$ for the measure $\mu_2$. The recurrence relations for these monic orthogonal
polynomials are
\[   xP_n(x;\mu_i) = P_{n+1}(x;\mu_i) + b_n(\mu_i) P_n (x;\mu_i) + a_n^2(\mu_i) P_{n-1}(x;\mu_i). \]
Observe that $P_n(x;\mu_1) = P_{n,0}(x)$ and $P_m(x;\mu_2) = P_{0,m}(x)$, so if we compare with the nearest neighbor recurrence relations
\eqref{NN1} and  \eqref{NN2} we find
\[    b_j(\mu_1) = c_{j,0}, \quad  a_j^2(\mu_1) = a_{j,0}, \]
and
\[   b_k(\mu_2) = d_{0,k},  \quad a_k^{2}(\mu_2) = b_{0,k}.  \]
If we use Theorem \ref{thm:NN} then this gives
\begin{equation}  \label{recmu1}
    b_j(\mu_1) = \beta_j^{(j,0)}, \quad  a_j^2(\mu_1) = \gamma_j^{(j-1,0)}, 
\end{equation}
where we used \eqref{c0(j,0)} to simplify the expression for $a_{j,0}$. 
The recurrence coefficients for $\mu_2$ can be obtained more easily from \eqref{xp0} with $n=0$, which gives
\[  xP_{0,k}(x) = P_{1,k}(x) + \beta_k^{(0,k)} P_{0,k}(x) + \gamma_k^{(0,k)} P_{0,k-1}(x), \]
and if we replace $P_{1,k}$ by using \eqref{c(0,k)} with $n=0$, then
\[   xP_{0,k}(x) = P_{0,k+1}(x) + \left( c_0^{(0,k)} + \beta_k^{(0,k)} \right) P_{0,k}(x) + \gamma_k^{(0,k)} P_{0,k-1}(x), \]
so that
\begin{equation}  \label{recmu2}
    b_k(\mu_2) = \beta_k^{(0,k)} + c_0^{(0,k)}, \quad
      a_k^2(\mu_2) = \gamma_k^{(0,k)} .  
\end{equation}

Observe that these results  allow us to find the recurrence coefficients of the orthogonal polynomials for the measures $\mu_1$ and $\mu_2$ if the
recurrence coefficients of the step-line multiple orthogonal polynomials are known. There are examples of multiple orthogonal polynomials for which the recurrence coefficients of the marginal orthogonal polynomials are not known. Such a situation occurs for instance in the case of multiple orthogonal polynomials associated to the modified Bessel functions of the first and second kind. In 1990 A.P. Prudnikov posed an open problem to find the orthogonal polynomials for the modified Bessel functions of the second kind $K_\nu(2\sqrt{x})$ on $[0,\infty)$ (see \cite[Problem 9 on pp. 239--241]{WVA-open}). It turned out that in this case it is more natural to consider multiple orthogonal polynomials for a pair of modified Bessel functions of the second kind $K_\nu(2\sqrt{x}),K_{\nu+1}(2\sqrt{x})$. This was shown by Van Assche and Yakubovich in \cite{WVA-Yaku} (see also \cite{BenCheikhDouak0}) and later for the modified Bessel functions of the first kind by Coussement and Van Assche in \cite{EC2} (see also \cite{Douak}). 
Our algorithm allows us to find the recurrence coefficients of those polynomials, even though we are not able to find explicit expressions for them.
If we start from the step-line recurrence coefficients given in \cite[Thm.~4]{WVA-Yaku}
\begin{eqnarray*}
  \beta_n &=& (n+\alpha+1)(3n+\alpha+2\nu) - (\alpha+1)(\nu-1), \\   
  \gamma_n &=& n(n+\alpha)(n+\alpha+\nu)(3n+2\alpha+\nu), \\
  \delta_n &=& n(n-1)(n+\alpha)(n+\alpha-1)(n+\alpha+\nu)(n+\alpha+\nu-1), 
\end{eqnarray*} 
for the multiple orthogonal polynomials with 
\[  d\mu_1(x)=x^{\alpha+\nu/2} K_\nu(2\sqrt{x}), \quad d\mu_2(x) = x^{\alpha+(\nu+1)/2} K_{\nu+1}(2\sqrt{x}), \]
and put $\alpha=0$ and $\nu=0$, then Table \ref{K0} gives the results of our algorithm for the recurrence coefficients 
$(a_{n},b_n)$ of the weight $K_0(2\sqrt{x})$.
    
\begin{table}[h]\footnotesize  
\centering
\begin{tabular}{|r|c|c|}
\hline
$n$ & $a_n$ & $b_n$  \\
\hline
0 & --  & 1 \\
1 & 1.7320508075688772935 & 9.6666666666666666667 \\
2 & 8.5374989832437982487 & 28.186991869918699187 \\ 
3 & 20.265386777687130909 & 56.571895845674401834 \\
4 & 36.925214834648582674 & 94.823932737801348717 \\
5 & 58.518554562959399225 & 142.94410230778264607 \\
6 & 85.045955898223602580 & 200.93289913274452209 \\
7 & 116.50767686120789662 & 268.79060407933245800 \\
8 & 152.90385976282648737 & 346.51739199614374938 \\
9 & 194.23459164836084172 & 434.11337913848760712 \\
10 & 240.49992974325090503 & 531.57864673346522330 \\
\hline 
\end{tabular}
\caption{Recurrence coefficients of orthogonal polynomials for $K_0(2\sqrt{x})$}
\label{K0}
\end{table}
Note that the values for the coefficients $a_n$ presented in Table \ref{K0} are the same as the ones that were computed in \cite{WVA-open} 
using the moments.

\section{From marginal to  nearest neighbor}

In the previous section we started from the step-line recurrence coefficients and we showed how to find the nearest neighbor
recurrence coefficients and in particular the recurrence coefficients of the orthogonal polynomials for the marginal measures
$\mu_1$ and $\mu_2$. In this section we will investigate the inverse problem: suppose the recurrence coefficients
$(a_{n+1}^2(\mu_i),b_n(\mu_i))_{n \geq 0}$ are given for $i \in \{1,2\}$. How can one find all the nearest neighbor recurrence
coefficients $(a_{n,m}, b_{n,m}, c_{n,m}, d_{n,m})$ and the step-line recurrence coefficients $(\beta_n, \gamma_n,\delta_n)$?

\subsection{The nearest neighbor recurrence coefficients}

The nearest neighbor recurrence coefficients satisfy a system of non-linear partial difference equations, as was noted in \cite{WVA}.
We will briefly show how to find these partial difference equations. The nearest neighbor recurrence relations \eqref{NN1} and \eqref{NN2} can be written in a matrix form as
\[      Y_{n+1,m} = R_1(n,m) Y_{n,m}, \quad Y_{n,m+1} = R_2(n,m) Y_{n,m}, \]
where
\[   Y_{n,m} =   \begin{pmatrix} P_{n,m}(x) \\ P_{n-1,m}(x) \\ P_{n,m-1}(x) \end{pmatrix}  \]
and $R_1$ and $R_2$ are the two transfer matrices
\[    R_1(n,m) = \begin{pmatrix} x-c_{n,m} & -a_{n,m} & -b_{n,m} \\
                                 1 & 0 & 0 \\
                                 1 & 0 & d_{n,m-1}-c_{n,m-1}  \end{pmatrix},  \]
and
\[    R_2(n,m) = \begin{pmatrix} x-d_{n,m} & -a_{n,m} & -b_{n,m} \\
                                 1 & c_{n-1,m}-d_{n-1,m} & 0 \\
                                 1 & 0 & 0  \end{pmatrix}.  \]
Now there are two ways of finding $Y_{n+1,m+1}$ from $Y_{n,m}$ using these transfer matrices: one way is to first compute $Y_{n+1,m}$ and then to increase $m$ by one, which gives
\[    Y_{n+1,m+1} = R_2(n+1,m)R_1(n,m)Y_{n,m}. \]
Another way is to first compute $Y_{n,m+1}$ and then to increase $n$ by one, giving
\[    Y_{n+1,m+1} = R_1(n,m+1) R_2(n,m) Y_{n,m}.  \]
Comparing both expressions gives the matrix relation
\[     R_2(n+1,m)R_1(n,m) =  R_1(n,m+1) R_2(n,m).  \]
If one computes the entries of this matrix identity, then one finds the following partial difference relations (see also \cite[Thm.~3.1]{WVA}):
\begin{eqnarray}
   d_{n+1,m}-d_{n,m} &=& c_{n,m+1}-c_{n,m} , \label{cd} \\
   a_{n+1,m} + b_{n+1,m} - (a_{n,m+1} + b_{n,m+1}) &=& \det \begin{pmatrix} d_{n+1,m} & d_{n,m} \\ c_{n,m+1} & c_{n,m} \end{pmatrix}, \label{a+b} \\
   \frac{a_{n,m+1}}{a_{n,m}} &=& \frac{c_{n,m}-d_{n,m}}{c_{n-1,m}-d_{n-1,m}}, \label{acd} \\
   \frac{b_{n+1,m}}{b_{n,m}} &=& \frac{c_{n,m}-d_{n,m}}{c_{n,m-1}-d_{n,m-1}}.  \label{bcd}
\end{eqnarray}

We will show that these partial difference equations with boundary conditions 
\begin{equation}  \label{BC1}
   a_{n,0} = a_n^2(\mu_1), \quad b_{n,0} = 0, \quad c_{n,0} = b_n(\mu_1), \qquad n \geq 0,  
\end{equation}
and
\begin{equation}  \label{BC2}
   a_{0,m} = 0, \quad b_{0,m} = a_m^2(\mu_2), \quad d_{0,m} = b_m(\mu_2), \qquad m \geq 0,
\end{equation}
where $a_n^2(\mu_i),b_n(\mu_i)$ are the recurrence coefficients of the monic orthogonal polynomials for the measure $\mu_i$ $(i=1,2)$ (with
$a_0^2(\mu_1)=a_0^2(\mu_2)=0$),
can be solved recursively to find the nearest neighbor recurrence coefficients for the multiple orthogonal polynomials with the measures $(\mu_1,\mu_2)$.

\begin{theorem}  \label{thm:inverse}
Suppose $(a_n^2(\mu_i))_{n \geq 1}$ and $(b_n(\mu_i))_{n \geq 0}$ are the recurrence coefficients of the monic orthogonal polynomials
$P_n(x;\mu_i)$ for the measure $\mu_i$, i.e.,
\[    xP_n(x;\mu_i) = P_{n+1}(x;\mu_i) + b_n(\mu_i) P_n(x;\mu_i) + a_n^2(\mu_i) P_{n-1}(x;\mu_i), \qquad n \geq 0, \]
with $P_0(x;\mu_i)=1$ and $P_{-1}(x;\mu_i)=0$. Then the nearest neighbor recurrence coefficients for the type II multiple orthogonal polynomials
can be computed recursively by \eqref{cd}--\eqref{bcd}, using the boundary conditions \eqref{BC1}--\eqref{BC2}, provided $c_{n,m} \neq d_{n,m}$
for all $n,m \geq 0$.
\end{theorem}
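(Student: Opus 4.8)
The plan is to show that the four partial difference equations \eqref{cd}--\eqref{bcd}, together with the boundary data \eqref{BC1}--\eqref{BC2}, determine every coefficient $a_{n,m},b_{n,m},c_{n,m},d_{n,m}$ uniquely and constructively. The natural strategy is induction on the total degree $n+m$, moving outward from the two coordinate axes where the data is prescribed. I would set up the induction so that at each stage I pass from the data on the staircase $\{(n,m): n+m = N\}$ (together with everything below it) to the entries on $\{(n,m): n+m = N+1\}$, using one difference equation to advance each coefficient across a unit step in the lattice.

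First I would read off how each of \eqref{cd}--\eqref{bcd} lets one compute a new coefficient. Rewriting \eqref{cd} as $d_{n+1,m} = d_{n,m} + c_{n,m+1} - c_{n,m}$ advances $d$ in the $n$-direction once the $c$'s one row up are known; symmetrically it gives $c_{n,m+1}$ in terms of $c_{n,m}$ and the $d$'s. Equation \eqref{acd}, rewritten as $a_{n,m+1} = a_{n,m}\,(c_{n,m}-d_{n,m})/(c_{n-1,m}-d_{n-1,m})$, advances $a$ in the $m$-direction, and \eqref{bcd} as $b_{n+1,m} = b_{n,m}\,(c_{n,m}-d_{n,m})/(c_{n,m-1}-d_{n,m-1})$ advances $b$ in the $n$-direction; both of these are exactly where the hypothesis $c_{n,m}\neq d_{n,m}$ is needed to keep the denominators nonzero. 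Equation \eqref{a+b} then serves as a consistency check (or, equivalently, supplies the remaining one of $a,b$ once the other is known). The key bookkeeping point is that the boundary conditions \eqref{BC1}--\eqref{BC2} initialize $c$ and $a$ along the $n$-axis and $d$ and $b$ along the $m$-axis, and these are precisely the starting values the above recursions require.

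Concretely I would argue: assume inductively that all four coefficients are known at every lattice point $(n,m)$ with $n+m\le N$. To reach a point $(n_0,m_0)$ with $n_0+m_0=N+1$ and $n_0,m_0\ge 1$, I would use \eqref{bcd} to obtain $b_{n_0,m_0}$ from $b_{n_0-1,m_0}$, use \eqref{acd} to obtain $a_{n_0,m_0}$ from $a_{n_0,m_0-1}$, use the $c$-form of \eqref{cd} to obtain $c_{n_0,m_0}$ from $c_{n_0-1,m_0}$, and the $d$-form of \eqref{cd} to obtain $d_{n_0,m_0}$ from $d_{n_0,m_0-1}$; the points feeding these recursions all lie in levels $\le N$ by the induction hypothesis, and the division steps are legitimate because $c_{n,m}\neq d_{n,m}$ throughout. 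Points with $n_0=0$ or $m_0=0$ are already fixed by the boundary data, closing the induction.

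The main obstacle I anticipate is not the existence of a recursive scheme but its \emph{well-definedness}: the system \eqref{cd}--\eqref{bcd} is overdetermined in the sense that a single new coefficient can in principle be reached by more than one of these equations along different lattice paths, so one must verify that the several ways of computing, say, $d_{n,m}$ or $a_{n,m}$ agree. This is exactly the zero-curvature content that produced \eqref{cd}--\eqref{bcd} in the first place — they arose from the compatibility $R_2(n+1,m)R_1(n,m)=R_1(n,m+1)R_2(n,m)$ of the transfer matrices — so I would resolve the obstacle by choosing one canonical advancing equation for each coefficient (as above) and then invoking the remaining relation, \eqref{a+b}, to confirm consistency rather than treating it as an independent recursion. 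Tracking the domains of validity (the index ranges $n\ge 0$, $m\ge 0$ and the shifted indices appearing in the denominators) and confirming that each recursion only ever calls on already-computed, lower-level entries is the careful part; once that compatibility is in hand, the construction and uniqueness follow immediately by the induction on $n+m$.
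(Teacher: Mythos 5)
Your overall strategy --- induction on the level $k=n+m$, seeding from the axis data \eqref{BC1}--\eqref{BC2} and advancing $a$ via \eqref{acd} and $b$ via \eqref{bcd}, with the hypothesis $c_{n,m}\neq d_{n,m}$ guarding those denominators --- matches the paper up to that point. But your scheme for advancing $c$ and $d$ has a genuine gap: \eqref{cd} is a \emph{single} scalar relation between the \emph{two} new-level unknowns $c_{n,m+1}$ and $d_{n+1,m}$, so using ``the $c$-form'' and ``the $d$-form'' of \eqref{cd} as two separate recursions is circular. Concretely, writing $c_{n,m+1}=c_{n,m}+d_{n+1,m}-d_{n,m}$ to reach $c_{n_0,m_0}$ requires $d_{n_0+1,m_0-1}$, which lies on the same diagonal $n+m=N+1$; and the $d$-form at $(n_0,m_0)$ requires $c_{n_0-1,m_0+1}$, again on the new diagonal. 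No ordering along the diagonal breaks this cycle, because each form of \eqref{cd} calls on the other unknown at level $N+1$. So your system as described is \emph{under}determined for $c,d$ --- the overdetermination/compatibility worry you raise is not the actual issue, and demoting \eqref{a+b} to a consistency check is precisely what creates the gap.

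The missing idea --- and what the paper does --- is to use \eqref{a+b} as the second advancing equation: after the new-level $a$'s and $b$'s are computed from \eqref{acd}--\eqref{bcd} (the right-hand side of \eqref{a+b} involves exactly $a_{n+1,m}+b_{n+1,m}-a_{n,m+1}-b_{n,m+1}$, so the order of computation matters), the pair \eqref{cd}, \eqref{a+b} forms a $2\times 2$ linear system in $c_{n,m+1}$ and $d_{n+1,m}$ with determinant $d_{n,m}-c_{n,m}$, whose solution is
\[
c_{n,m+1} = c_{n,m} + \frac{a_{n+1,m}+b_{n+1,m}-a_{n,m+1}-b_{n,m+1}}{c_{n,m}-d_{n,m}}, \quad
d_{n+1,m} = d_{n,m} + \frac{a_{n+1,m}+b_{n+1,m}-a_{n,m+1}-b_{n,m+1}}{c_{n,m}-d_{n,m}},
\]
supplemented at the diagonal's endpoints by the boundary values $c_{k+1,0}=b_{k+1}(\mu_1)$ and $d_{0,k+1}=b_{k+1}(\mu_2)$. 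This is a second, independent place where the hypothesis $c_{n,m}\neq d_{n,m}$ enters (nonvanishing of the determinant; already at the first step it is the condition $b_0(\mu_1)\neq b_0(\mu_2)$ needed to find $c_{0,1}$ and $d_{1,0}$), beyond the denominators you identified in \eqref{acd}--\eqref{bcd}. Once \eqref{a+b} is promoted from check to recursion, your induction closes exactly as in the paper, and no separate consistency verification is needed: the actual nearest neighbor coefficients satisfy all four relations, and the scheme is deterministic.
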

  
\begin{proof}
We use induction on $k$, where $k=n+m$ is the length of the multi-index $(n,m)$ and show how to compute $a_{n,m},b_{n,m},c_{n,m},d_{n,m}$
when the recurrence coefficients are known for multi-indices of length less than $k$.
For $k=0$ we have that $a_{0,0}=b_{0,0}=0$ (these appear as coefficients of $P_{-1,0}$ and $P_{0,-1}$ and hence are not needed) and
$c_{0,0}=b_0(\mu_1)$, $d_{0,0}=b_0(\mu_2)$. Hence the case $k=0$ is settled. For $k=1$ we already have
$a_{1,0}=a_1^2(\mu_1),c_{1,0}=b_1(\mu_1), b_{0,1}=a_1^2(\mu_2),d_{0,1}=b_1(\mu_2)$, and we also defined $a_{0,1}=b_{1,0}=0$ (these
appear as coefficients for $P_{-1,1}$ and $P_{1,-1}$ and are not needed). This leaves only to determine $c_{0,1}$ and $d_{1,0}$.
If we use \eqref{cd} and \eqref{a+b} for $n=m=0$, then this gives the system of equations
\begin{eqnarray*}
      d_{1,0}-c_{0,1} &=& d_{0,0}-c_{0,0}, \\
      d_{1,0}c_{0,0} - c_{0,1}d_{0,0} &=& a_{1,0}+b_{1,0}-a_{0,1}-b_{0,1}.
\end{eqnarray*}
This is a linear system of two equations for $c_{0,1}$ and $d_{1,0}$. The determinant of the system is $d_{0,0}-c_{0,0}$ and hence this system has a unique solution whenever $b_0(\mu_1) \neq b_0(\mu_2)$.

\begin{figure}[ht]
\centering
\begin{tikzpicture}[domain=0:2]
\draw[->, color=black] (0,0) -- (3.5,0) node[below right] {$n$};
\draw[->, color=black] (0,0) -- (0,3.5) node[left] {$m$};
\draw[line width=0.5mm, color=blue, dashed] (0,0.5) -- (0.5,0);
\draw[line width=0.5mm, color=blue, dashed] (0,1) -- (1,0);
\draw[line width=0.5mm, color=blue, dashed] (0,1.5) -- (1.5,0);
\draw[->, line width=0.4mm] (1,1) -- (1.3,1.3);
\draw[line width=0.5mm, color=blue, dashed] (0,3) -- (3,0);
\draw[] (-0.3,-0.3)  node[right] {$0$};
\draw[] (0.2,-0.3)  node[right] {$1$};
\draw[] (0.7,-0.3)  node[right] {$2$};
\draw[] (1.2,-0.3)  node[right] {$3$};
\draw[] (1.7,-0.3)  node[right] {$\ldots $};
\end{tikzpicture}
\caption{Moving along the lines $m+n=k$}
\label{fig:moving}
\end{figure}
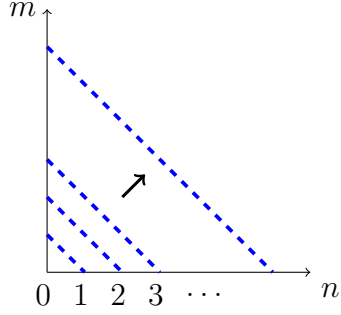

Suppose next that we know all the nearest neighbor recurrence coefficients for multi-indices $(n,m)$ of length $ \leq k$. 
From \eqref{acd} we then find
\begin{equation*}  
    a_{n,m+1} = a_{n,m} \frac{c_{n,m}-d_{n,m}}{c_{n-1,m}-d_{n-1,m}}, 
\end{equation*}
and from \eqref{bcd} we find  
\begin{equation*} 
    b_{n+1,m} = b_{n,m} \frac{c_{n,m}-d_{n,m}}{c_{n,m-1}-d_{n,m-1}}. 
\end{equation*}
If we replace $n$ by $\ell$ and $m$ by $k-\ell$, then this gives
\begin{equation}  \label{anm1}
  a_{\ell,k-\ell+1} = a_{\ell,k-\ell} \frac{c_{\ell,k-\ell}-d_{\ell,k-\ell}}{c_{\ell-1,k-\ell}-d_{\ell-1,k-\ell}}, \qquad 1 \leq \ell \leq k. 
\end{equation}
For $\ell=0$ and $\ell=k+1$ we use the boundary conditions 
\[ a_{0,k+1} = 0, \quad a_{k+1,0} = a_{k+1}^2(\mu_1). \]
In a similar way
\begin{equation}   \label{bnm1}
  b_{\ell+1,k-\ell} = b_{\ell,k-\ell} \frac{c_{\ell,k-\ell}-d_{\ell,k-\ell}}{c_{\ell,k-\ell-1}-d_{\ell,k-\ell-1}}, \qquad 0 \leq \ell \leq k-1. 
\end{equation}
For $\ell = -1$ and $\ell=k$ we use the boundary conditions
\[  b_{0,k+1} = a_{k+1}^2(\mu_2), \quad b_{k+1,0} = 0.  \]
The expressions on the right of \eqref{anm1}--\eqref{bnm1} contain coefficients of multi-indices of length $k$ and $k-1$ and hence they are known.
If we use \eqref{cd} and \eqref{a+b}, then
\begin{eqnarray*}
     d_{n+1,m} - c_{n,m+1} &=& d_{n,m} - c_{n,m} , \\
     d_{n+1,m}c_{n,m} - c_{n,m+1} d_{n,m} &=& a_{n+1,m}+b_{n+1,m}-a_{n,m+1}-b_{n,m+1}, 
\end{eqnarray*}
which is a linear system for $c_{n,m+1}$ and $d_{n+1,m}$ with determinant $d_{n,m}-c_{n,m}$. This system has a unique solution
whenever $c_{n,m}\neq d_{n,m}$. This solution is
\[   c_{n,m+1} = c_{n,m} + \frac{a_{n+1,m}+b_{n+1,m}-a_{n,m+1}-b_{n,m+1}}{c_{n,m}-d_{n,m}}, \]
and
\[   d_{n+1,m} = d_{n,m} + \frac{a_{n+1,m}+b_{n+1,m}-a_{n,m+1}-b_{n,m+1}}{c_{n,m}-d_{n,m}}.  \]
Replacing $n$ by $\ell$ and $m$ by $k-\ell$ then gives
\begin{equation} \label{cnm1}
    c_{\ell,k-\ell+1} = c_{\ell,k-\ell} + \frac{a_{\ell+1,k-\ell}+b_{\ell+1,k-\ell}-a_{\ell,k-\ell+1}-b_{\ell,k-\ell+1}}
   {c_{\ell,k-\ell}-d_{\ell,k-\ell}},  \qquad 0 \leq \ell \leq k, 
\end{equation}
and for $\ell=k+1$ we use the boundary condition
\[     c_{k+1,0} = b_{k+1}(\mu_1). \]
Similarly we have 
\begin{equation} \label{dnm1}
    d_{\ell+1,k-\ell} = d_{\ell,k-\ell} + \frac{a_{\ell+1,k-\ell}+b_{\ell+1,k-\ell}-a_{\ell,k-\ell+1}-b_{\ell,k-\ell+1}}
   {c_{\ell,k-\ell}-d_{\ell,k-\ell}},   \qquad 0 \leq \ell \leq k, 
\end{equation}
and for $\ell=-1$ we use the boundary condition
\[      d_{0,k+1} = b_{k+1}(\mu_2).  \]
\end{proof}

We can implement this in Maple using the following procedure which computes $a_{n,m},$ $b_{n,m},c_{n,m},d_{n,m}$ for $n+m \leq N+M$. It requires
the input $a1(n) = a_n^2(\mu_1)$ and $b1(n)=b_n(\mu_1)$ for the first measure $\mu_1$ and $a2(n) = a_n^2(\mu_2)$ and $b2(n)=b_n(\mu_2)$
for the second measure $\mu_2$, for $0 \leq n \leq N+M$, where we set $a_0^2(\mu_1)=0=a_0^2(\mu_2)$. In particular it gives the
coefficients $c_{N,M},d_{N,M},a_{N,M},b_{N,M}$ which will be given in the output explicitly.

\begin{verbatim}
IP:=proc(N,M) 
 local n,m,k; 
 for n from 0 to N+M do 
  c(n,0):=b1(n); 
  a(n,0):=a1(n); 
  b(n,0):=0; 
 end do; 
 for m from 0 to N+M do 
  d(0,m):=b2(m); 
  a(0,m):=0; 
  b(0,m):=a2(m); 
 end do; 
 for n from 1 to N+M do 
  for k from 1 to n-1 do 
    a(k,n-k):=a(k,n-k-1)*(c(k,n-k-1)-d(k,n-k-1))/(c(k-1,n-k-1)-d(k-1,n-k-1)); 
    b(k,n-k):=b(k-1,n-k)*(c(k-1,n-k)-d(k-1,n-k))/(c(k-1,n-k-1)-d(k-1,n-k-1)); 
  end do; 
  for k from 1 to n do 
   c(n-k,k):=c(n-k,k-1) 
       +(a(n-k+1,k-1)+b(n-k+1,k-1)-a(n-k,k)-b(n-k,k))/(c(n-k,k-1)-d(n-k,k-1)); 
  end do; 
  for k from 1 to n do 
   d(k,n-k):=c(k-1,n-k+1)-c(k-1,n-k)+d(k-1,n-k); 
  end do; 
 end do; 
Vector([c(N,M),d(N,M),a(N,M),b(N,M)]); 
end proc;  
\end{verbatim}

\subsection{The step-line recurrence coefficients}
Next we will show how to compute the recurrence coefficients in the step-line recurrence \eqref{step-line} if one knows the nearest neighbor
recurrence coefficients.

\begin{theorem}  \label{thm:stepline}
Suppose that the nearest neighbor recurrence coefficients are given. Then the step-line recurrence coefficients in \eqref{step-line}
are given by
\begin{eqnarray*}
     \beta_{2n} &=& c_{n,n},  \qquad n \geq 0,\\
     \gamma_{2n} &=& a_{n,n} + b_{n,n}, \qquad n \geq 1,\\
     \delta_{2n} &=& a_{n,n} ( c_{n-1,n-1} - d_{n-1,n-1}), \qquad n \geq 1,
\end{eqnarray*}
and
\begin{eqnarray*}
     \beta_{2n+1} &=& d_{n+1,n}, \qquad n \geq 0, \\
     \gamma_{2n+1} &=& a_{n+1,n} + b_{n+1,n}, \qquad n \geq 0, \\
     \delta_{2n+1} &=& b_{n+1,n} (d_{n,n-1}-c_{n,n-1}), \qquad n \geq 1.
\end{eqnarray*}
\end{theorem}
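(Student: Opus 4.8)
The plan is to rewrite the step-line recurrence \eqref{step-line} in terms of the type II polynomials $P_{n,m}$ via the dictionary $p_{2n}=P_{n,n}$, $p_{2n+1}=P_{n+1,n}$, and then match it term by term against the nearest neighbor relations \eqref{NN1}--\eqref{NN2}. Since the step-line polynomials have strictly increasing degrees, the polynomials appearing in each comparison are linearly independent, so once both sides are written in the same basis the coefficients can simply be read off. The only genuine manipulation is that the nearest neighbor relations naturally produce a polynomial that sits one step \emph{off} the step-line; this stray term must be folded back using the difference identity \eqref{P-P}, $P_{n+1,m}-P_{n,m+1}=\kappa_{n,m}P_{n,m}$ with $\kappa_{n,m}=d_{n,m}-c_{n,m}$.

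For the even-index coefficients I would take \eqref{step-line} at index $2n$ and translate it into
\[ xP_{n,n} = P_{n+1,n} + \beta_{2n}P_{n,n} + \gamma_{2n}P_{n,n-1} + \delta_{2n}P_{n-1,n-1}. \]
On the other hand, \eqref{NN1} with $m=n$ gives $xP_{n,n}$ in terms of $P_{n+1,n}, P_{n,n}, P_{n-1,n}, P_{n,n-1}$; the stray term is $P_{n-1,n}$, which I eliminate with \eqref{P-P} at $(n-1,n-1)$, writing $P_{n-1,n}=P_{n,n-1}-\kappa_{n-1,n-1}P_{n-1,n-1}$. After collecting, both expressions are supported on the independent set $\{P_{n,n},P_{n,n-1},P_{n-1,n-1}\}$, and comparing coefficients yields $\beta_{2n}=c_{n,n}$, $\gamma_{2n}=a_{n,n}+b_{n,n}$, and $\delta_{2n}=-a_{n,n}\kappa_{n-1,n-1}=a_{n,n}(c_{n-1,n-1}-d_{n-1,n-1})$.

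For the odd-index coefficients I would take \eqref{step-line} at index $2n+1$, obtaining
\[ xP_{n+1,n} = P_{n+1,n+1} + \beta_{2n+1}P_{n+1,n} + \gamma_{2n+1}P_{n,n} + \delta_{2n+1}P_{n,n-1}, \]
and compare with \eqref{NN2} (the relation that raises $m$) at $(n+1,n)$, which expresses $xP_{n+1,n}$ through $P_{n+1,n+1}, P_{n+1,n}, P_{n,n}, P_{n+1,n-1}$. Here the off-line term is $P_{n+1,n-1}$, removed by \eqref{P-P} at $(n,n-1)$ via $P_{n+1,n-1}=P_{n,n}+\kappa_{n,n-1}P_{n,n-1}$. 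Matching coefficients on the independent set $\{P_{n+1,n},P_{n,n},P_{n,n-1}\}$ then gives $\beta_{2n+1}=d_{n+1,n}$, $\gamma_{2n+1}=a_{n+1,n}+b_{n+1,n}$, and $\delta_{2n+1}=b_{n+1,n}\kappa_{n,n-1}=b_{n+1,n}(d_{n,n-1}-c_{n,n-1})$. I do not anticipate a serious obstacle, since the whole argument is a basis comparison; the only point requiring care is selecting the correct nearest neighbor relation in each parity (\eqref{NN1} for even, since advancing $n$ reaches $P_{n+1,n}$; \eqref{NN2} for odd, since advancing $m$ reaches $P_{n+1,n+1}$) together with the matching instance of \eqref{P-P} needed to recycle the stray polynomial back onto the step-line.
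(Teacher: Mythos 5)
Your proposal is correct and is essentially identical to the paper's own proof: the paper likewise compares \eqref{step-line} at index $2n$ with \eqref{NN1} at $(n,n)$ and at index $2n+1$ with \eqref{NN2} at $(n+1,n)$, in each case eliminating the single off-step-line polynomial ($P_{n-1,n}$, resp.\ $P_{n+1,n-1}$) via \eqref{P-P} and reading off coefficients by linear independence. All your signs and instances of $\kappa_{n,m}=d_{n,m}-c_{n,m}$ check out, so nothing further is needed.
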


\begin{proof}
From the nearest neighbor recurrence relation \eqref{NN1} we find
\[   xP_{n,n}(x) = P_{n+1,n}(x) + c_{n,n} P_{n,n}(x) + a_{n,n} P_{n-1,n}(x) + b_{n,n} P_{n,n-1}(x). \]
Use \eqref{P-P} to replace $P_{n-1,n}$ to find
\begin{multline*}   xP_{n,n}(x) = P_{n+1,n}(x) + c_{n,n} P_{n,n}(x) 
    + (a_{n,n}+b_{n,n}) P_{n,n-1}(x)  \\
+ a_{n,n} (c_{n-1,n-1}-d_{n-1,n-1})P_{n-1,n-1}(x).  
\end{multline*}
If we compare this with \eqref{step-line} with $n$ replaced by $2n$, then we find the relations for the even recurrence coefficients.
The proof for the odd recurrence coefficients is similar: use \eqref{NN2} to find
\[   xP_{n+1,n}(x) = P_{n+1,n+1}(x) + d_{n+1,n} P_{n+1,n}(x) + a_{n+1,n} P_{n,n}(x) + b_{n+1,n} P_{n+1,n-1}(x), \]
and replace $P_{n+1,n-1}$ using \eqref{P-P}, giving
\begin{multline*}
   xP_{n+1,n}(x) = P_{n+1,n+1}(x) + d_{n+1,n} P_{n+1,n}(x) + (a_{n+1,n} + b_{n+1,n})P_{n,n}(x) \\
+ b_{n+1,n} (d_{n,n-1}-c_{n,n-1}) P_{n,n-1}(x). 
\end{multline*}
Comparing with the recurrence relation \eqref{step-line} with $n$ replaced by $2n+1$ gives the required result. 
\end{proof}

\subsection{The nearest neighbor coefficients for general $r$}

For general $r$ there are more partial difference equations for the nearest neighbor recurrence coefficients. The nearest neighbor recurrence relations \eqref{NNrec} can be written as
\[    Y_{\vec{n} + \vec{e}_k} = R_{k}(\vec{n}) Y_{\vec{n}}, \qquad 1 \leq k \leq r, \]
where
\[   Y_{\vec{n}} = \begin{pmatrix}
                   P_{\vec{n}}(x) \\ P_{\vec{n}-\vec{e}_1}(x) \\ \vdots \\ P_{\vec{n}-\vec{e}_r}(x)
                  \end{pmatrix}  \]
and $R_{k}(\vec{n})$ are $(r+1)\times(r+1)$ transfer matrices given by
\[   R_{k}(\vec{n}) = \begin{pmatrix}
                      x - b_{\vec{n},k} & - a_{\vec{n},1} & \cdots & -a_{\vec{n},k} & \cdots & -a_{\vec{n},r} \\
                      1 & b_{\vec{n}-\vec e_1,1}-b_{\vec{n}-\vec e_1,k} & \cdots & 0 & \cdots & 0 \\
                      \vdots &   & \ddots &  & & \vdots \\
                      1 & 0 & \cdots & 0 & \cdots & 0 \\
                      \vdots &   & & & \ddots & \vdots\\
                      1 & 0 & \cdots & 0 & \cdots & b_{\vec{n}-\vec e_r,r}-b_{\vec{n}-\vec e_r,k}  
                      \end{pmatrix}.  \]
Expressing that $Y_{\vec{n}+\vec{e}_i+\vec{e}_j}$ can be computed in two ways when $i \neq j$ is done by
\[    R_i(\vec{n}+\vec{e}_j)R_j(\vec{n}) = R_j(\vec{n}+\vec{e}_i)R_i(\vec{n}), \] 
and this gives the following partial difference relations \cite[Thm.~3.2]{WVA}: for all $1 \leq i \neq j \leq r$  one has
\begin{eqnarray}
   b_{\vec{n}+\vec{e}_i,j}-b_{\vec{n},j} &=& b_{\vec{n}+\vec{e}_j,i}-b_{\vec{n},i},  \label{PD1} \\
   \sum_{k=1}^r a_{\vec{n}+\vec{e}_j,k} - \sum_{k=1}^r a_{\vec{n}+\vec{e}_i,k} &=& \det \begin{pmatrix}
                               b_{\vec{n}+\vec{e}_j,i} & b_{\vec{n},i} \\
                               b_{\vec{n}+\vec{e}_i,j} & b_{\vec{n},j} \end{pmatrix}, \label{PD2}  \\
   \frac{a_{\vec{n}+\vec{e}_j,i}}{a_{\vec{n},i}} &=&   \frac{b_{\vec{n},j}-b_{\vec{n},i}}{b_{\vec{n}-\vec{e}_i,j}-b_{\vec{n}-\vec{e}_i,i}}.
   \label{PD3}
\end{eqnarray}

\begin{theorem}  
Suppose the recurrence coefficients $(a_n^2(\mu_i))_{n\geq 1}$ and $(b_n(\mu_i))_{n \geq 0}$ of the monic orthogonal polynomials for the measure
$\mu_i$ are known $(1 \leq i \leq r)$.
Then the nearest neighbor recurrence coefficients $a_{\vec{n},j}, b_{\vec{n},j}$ $(1 \leq j \leq r)$ can be computed from
\eqref{PD1}--\eqref{PD3} and the boundary conditions
\[   a_{n\vec{e}_j,j} = a_n^2(\mu_j), \quad b_{n\vec{e}_j,j} = b_n(\mu_j), \qquad n \geq 0,\ 1 \leq j \leq r, \]
where $a_0^2(\mu_j)=0$, and
\[  a_{n\vec{e}_i,j} = 0, \qquad n \geq 0,\ i \neq j, \]
provided that $b_{\vec{n},i} \neq b_{\vec{n},j}$ for all multi-indices $\vec{n} \in \mathbb{N}^r$ and $1 \leq i \neq j \leq r$.
\end{theorem}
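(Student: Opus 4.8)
The plan is to run the same induction on the length $k=|\vec{n}|$ as in the proof of Theorem~\ref{thm:inverse}, showing that once $a_{\vec{n},j}$ and $b_{\vec{n},j}$ are known for all $|\vec{n}|\le k$, the relations \eqref{PD1}--\eqref{PD3} together with the boundary conditions determine those with $|\vec{n}|=k+1$. The base case $k=0$ is immediate: the boundary data give $b_{\vec{0},j}=b_0(\mu_j)$ for every $j$, while the $a_{\vec{0},j}$ multiply $P_{-\vec{e}_j}=0$ and are not needed. At each step I would first produce all the $a$'s at level $k+1$ from \eqref{PD3}, and then all the $b$'s from \eqref{PD1}--\eqref{PD2}, exactly paralleling the use of \eqref{acd}--\eqref{bcd} followed by \eqref{cd}--\eqref{a+b} in the case $r=2$.

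For the $a$-step, fix $\vec{m}$ with $|\vec{m}|=k+1$ and an index $i$. Since the term $P_{\vec{n}-\vec{e}_j}$ is absent from \eqref{NNrec} whenever $n_j=0$, one has $a_{\vec{m},i}=0$ as soon as $m_i=0$; this extends the axis data $a_{n\vec{e}_l,i}=0$ ($l\neq i$) off the axes and disposes of the cases in which the denominator of \eqref{PD3} would be ill posed. If $\vec{m}=(k+1)\vec{e}_i$ then $a_{\vec{m},i}=a_{k+1}^2(\mu_i)$ by the boundary condition. Otherwise $m_i\ge 1$ and $\vec{m}$ must have a positive component in some direction $j\neq i$; writing $\vec{n}=\vec{m}-\vec{e}_j$, relation \eqref{PD3} gives
\[
   a_{\vec{m},i}=a_{\vec{n}+\vec{e}_j,i}=a_{\vec{n},i}\,
      \frac{b_{\vec{n},j}-b_{\vec{n},i}}{b_{\vec{n}-\vec{e}_i,j}-b_{\vec{n}-\vec{e}_i,i}},
\]
whose right-hand side involves only coefficients of length $k$ and $k-1$, all known by induction, the denominator being well defined because $n_i=m_i\ge1$ and nonzero by the standing hypothesis.

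For the $b$-step, I would read, for each base point $\vec{n}$ with $|\vec{n}|=k$ and each pair $i\neq j$, the relations \eqref{PD1} and \eqref{PD2} as a linear system in the two level-$(k+1)$ unknowns $b_{\vec{n}+\vec{e}_i,j}$ and $b_{\vec{n}+\vec{e}_j,i}$, its right-hand sides being the already-computed quantities $b_{\vec{n},j}-b_{\vec{n},i}$ and $\sum_{t=1}^{r} a_{\vec{n}+\vec{e}_j,t}-\sum_{t=1}^{r} a_{\vec{n}+\vec{e}_i,t}$. The determinant of this system is $b_{\vec{n},j}-b_{\vec{n},i}$, nonzero by hypothesis, and it yields the unique value
\[
   b_{\vec{n}+\vec{e}_j,i}=b_{\vec{n},i}
      +\frac{\sum_{t=1}^{r} a_{\vec{n}+\vec{e}_j,t}-\sum_{t=1}^{r} a_{\vec{n}+\vec{e}_i,t}}{b_{\vec{n},j}-b_{\vec{n},i}},
\]
its companion $b_{\vec{n}+\vec{e}_i,j}$ being obtained by \eqref{PD1}, i.e.\ by replacing the leading $b_{\vec{n},i}$ with $b_{\vec{n},j}$. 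The boundary conditions supply the diagonal values $b_{(k+1)\vec{e}_l,l}=b_{k+1}(\mu_l)$, and every remaining $b_{\vec{m},l}$ with $|\vec{m}|=k+1$ is reached, since such $\vec{m}$ either equals $(k+1)\vec{e}_l$ or has a positive component in some direction $s\neq l$, in which case $b_{\vec{m},l}$ appears in the pair-$(s,l)$ system based at $\vec{m}-\vec{e}_s$.

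The feature with no analogue in the case $r=2$, and which I expect to be the main obstacle, is well-definedness of the recursion: for $r\ge 3$ a given $b_{\vec{m},l}$ may arise from several pairwise systems (distinct admissible $s$) and $a_{\vec{m},i}$ from several choices of $j$, so one must check that all of these agree. The conceptual resolution is that \eqref{PD1}--\eqref{PD3} are precisely the entrywise form of the transfer-matrix compatibility $R_i(\vec{n}+\vec{e}_j)R_j(\vec{n})=R_j(\vec{n}+\vec{e}_i)R_i(\vec{n})$: this single plaquette relation makes the product of transfer matrices along any monotone lattice path from $\vec{0}$ to $\vec{m}$ independent of the path, because any two such paths are connected by elementary square moves, each an instance of that relation. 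Equivalently, since every elementary step above is uniquely solvable under the standing assumption $b_{\vec{n},i}\neq b_{\vec{n},j}$, and since the genuine nearest neighbor coefficients of the (normal) system satisfy every instance of \eqref{PD1}--\eqref{PD3} and the boundary conditions, the unique value produced at each step is forced to coincide with the true coefficient; hence no inconsistency can occur and the induction closes.
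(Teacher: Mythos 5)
Your proposal is correct and takes essentially the same route as the paper: induction on $|\vec{n}|$, computing the new-level $a_{\vec{m},i}$ from \eqref{PD3} with $\vec{n}=\vec{m}-\vec{e}_j$ (setting $a_{\vec{m},i}=0$ when $m_i=0$ and using the boundary data on the axes), and then the new-level $b$'s from the $2\times 2$ linear system \eqref{PD1}--\eqref{PD2} with determinant $b_{\vec{n},j}-b_{\vec{n},i}$. Your closing remark on well-definedness for $r\geq 3$ (different admissible choices of $j$ or $s$ must agree, which follows since the genuine coefficients satisfy every instance of \eqref{PD1}--\eqref{PD3} and each step is uniquely solvable) is left implicit in the paper and is a worthwhile clarification, but it does not alter the argument.
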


\begin{proof}
We use induction on the length $N=|\vec{n}|$ of the multi-index $\vec{n}$. 
For $|\vec{n}|=0$ we see that $a_{\vec{0},j}=0$ and $b_{\vec{0},j}=b_0(\mu_j)$ for $1 \leq j \leq r$. If $|\vec{n}|=1$ then
$\vec{n}=\vec{e}_i$ for some $i$ with $1 \leq i \leq r$. Therefore $a_{\vec{n},j}= a_{\vec{e}_i,j} = 0$ whenever $i \neq j$ and
$a_{\vec{e}_i,i}=a_1^2(\mu_i)$. Furthermore $b_{\vec{n},i} = b_{\vec{e}_i,i} = b_1(\mu_i)$. Using \eqref{PD1} we also find
\[   b_{\vec{e}_i,j} = b_{\vec{e}_j,i} + b_0(\mu_j) - b_0(\mu_i),  \]
and  \eqref{PD2} gives
\[   b_{\vec{e}_j,i}b_0(\mu_j) - b_{\vec{e}_i,j}b_0(\mu_i) =  \sum_{k=1}^r a_{\vec{e}_j,k} - \sum_{k=1}^r a_{\vec{e}_i,k}. \]
Solving this linear system gives for $i \neq j$
\[    b_{\vec{e}_j,i} = b_0(\mu_i) + \frac{\sum_{k=1}^r a_{\vec{e}_j,k} - \sum_{k=1}^r a_{\vec{e}_i,k}}{b_0(\mu_j)-b_0(\mu_i)}, \]
provided $b_0(\mu_i) \neq b_0(\mu_j)$. Hence all the nearest neighbor recurrence coefficients are known for $|\vec{n}|=1$.

Suppose that we know all the nearest neighbor recurrence coefficients with multi-indices of length $\leq N$. Let $\vec{m}$ be a multi-index
of length $N+1$. In order to compute $a_{\vec{m},i}$ we choose a $j \neq i$ such that $m_i \geq 1$ and write $\vec{m} = \vec{n} + \vec{e}_j$,
where $\vec{n}$ is a multi-index of length $N$. 
Then from \eqref{PD3} we find that
\begin{equation}  \label{ar1}
  a_{\vec{m},i} =  a_{\vec{n}+\vec{e}_j,i} = a_{\vec{n},i} \frac{b_{\vec{n},j}-b_{\vec{n},i}}{b_{\vec{n}-\vec{e}_i,j}-b_{\vec{n}-\vec{e}_i,i}}.
\end{equation}
The coefficients on the right hand side have a multi-indices of length $N$ or $N-1$ and hence \eqref{ar1} allows us to compute
$a_{\vec{m},i}$ when $m_i=n_i \geq 1$. If $m_i=0$ then $a_{\vec{m},i}$ is the coefficient of the polynomial $P_{\vec{m}-\vec{e}_i}$ which is $0$, hence
we don't need this coefficient and we can set it equal to $0$. If $m_j=0$ for all $j \neq i$ then $\vec{m}=(N+1)\vec{e}_i$ and we have
$a_{(N+1)\vec{e}_i,i} = a_{N+1}^2(\mu_i)$.

If $i \neq j$ then the equations \eqref{PD1} and \eqref{PD2} are a linear system for the two unknowns $b_{\vec{n}+\vec{e}_i,j}$ and $b_{\vec{n}+\vec{e}_j,i}$. 
The solution is
\begin{equation}  \label{br1}
    b_{\vec{n}+\vec{e}_j,i} = b_{\vec{n},i} + \frac{\sum_{k=1}^r a_{\vec{n}+\vec{e}_j,k}- \sum_{k=1}^r a_{\vec{n}+\vec{e}_i,k}}
                              {b_{\vec{n},j}-b_{\vec{n},i}},
\end{equation}
provided that $b_{\vec{n},j} \neq b_{\vec{n},i}$. Hence if there exists $j \neq i$ with $m_j \geq 1$, then $\vec{m}=\vec{n}+\vec{e}_j$, and we can compute $b_{\vec{m},i}$ from \eqref{br1}.
If $m_j=0$ for all $j \neq i$ then $\vec{m}=(N+1)\vec{e}_i$ and $b_{(N+1)\vec{e}_i,i} = b_{N+1}(\mu_i)$.
\end{proof}


\begin{thebibliography}{9}
\bibitem{ABVA} A.I. Aptekarev, A. Branquinho, W. Van Assche,
\textit{Multiple orthogonal polynomials for classical weights},
Trans.\ Amer.\ Math.\ Soc.\ \textbf{355} (2003), 3887--3914. 
\bibitem{ACVA} J. Arves\'u, J. Coussement, W. Van Assche, 
\textit{Some discrete multiple orthogonal polynomials}, 
J. Comput.\ Appl.\ Math.\ \textbf{153} (2003), 19--45. 
\bibitem{BenCheikhDouak0} Y. Ben Cheikh, K. Douak, 
\textit{On two-orthogonal polynomials related to the Bateman's $J_n^{u,v}$-function}, 
Methods Appl.\ Anal.\  \textbf{7}  (2000),  no.~4, 641--662.
\bibitem{BenCheikhDouak} Y. Ben Cheikh, K. Douak, 
\textit{On the classical $d$-orthogonal polynomials defined by certain generating functions. I, II}, 
Bull. Belg. Math. Soc. (Simon Stevin) \textbf{7} (2000),  no.~1, 107--124;
Bull. Belg. Math. Soc. (Simon Stevin) \textbf{8} (2001),  no.~4, 591-–605.
\bibitem{EC2} E. Coussement, W. Van Assche,
\textit{Multiple orthogonal polynomials associated with the modified Bessel functions of the first kind},
Constr.\ Approx.\ \textbf{19} (2003), no.~2, 237--263.
\bibitem{JC} J. Coussement, W. Van Assche,
\textit{Gaussian quadrature for multiple orthogonal polynomials},
J. Comput.\ Appl.\ Math.\ \textbf{178} (2005), 131--145.
\bibitem{Douak} K. Douak, 
\textit{On $2$-orthogonal polynomials of Laguerre type}, 
Int.\ J. Math.\ Math.\ Sci.\  \textbf{22}  (1999),  no.~1, 29--48.
\bibitem{DouakMaroni} K. Douak, P. Maroni,
\textit{Une caract\'erisation des polyn\^omes $d$-orthogonaux ``classiques''}, 
J. Approx. Theory  \textbf{82}  (1995),  no.~2, 177--204.
\bibitem{Gautschi} W. Gautschi,
\textit{Orthogonal Polynomials: Computation and Approximation},
Numerical Mathematics and Scientific Computation, Oxford University Press, 2004.
\bibitem{Ismail} M.E.H. Ismail, 
\textit{Classical and Quantum Orthogonal Polynomials in One Variable}, 
Encyclopedia of Mathematics and its Applications, vol.\ \textbf{98},
Cambridge University Press, 2005;
paperback edition 2009. 
\bibitem{Jacobi} C.G.J. Jacobi,
\textit{Allgemeine Theorie der kettenbruch\"ahnlichen Algorithmen, in welchen jede Zahl aus drei vorhergehenden gebildet wird},
J. Reine Angew. Math. \textbf{69} (1868), 29--64.
\bibitem{Maroni} P. Maroni, 
\textit{L'orthogonalit\'e et les r\'ecurrences de polyn\^omes d'ordre sup\'erieur \`a deux}, 
Ann. Fac. Sci. Toulouse Math. (5)  \textbf{10}  (1989),  no.~1, 105--139.
\bibitem{Parus} V.K. Parusnikov,
\textit{The Jacobi-Perron algorithm and simultaneous approximation of functions},
Mat. Sb. \textbf{114} (156) (1981), nr. 2, 322--333 (in Russian); translated in Math. USSR Sbornik \textbf{42} (1982), 287--296.
\bibitem{Perron} O. Perron,
\textit{Grundlagen f\"ur eine Theorie des Jacobischen Kettenbruchalgorithmus},
Math. Ann. \textbf{64} (1907), 1--76.
\bibitem{Rutis} H. Rutishauser,
\textit{Der Quotienten-Differenzen-Algorithmus}, 
Mitt. Inst. Angew. Math. Z\"urich \textbf{1957} (1957), nr. 7, 74 pp.
\bibitem{Schweiger1} F. Schweiger,
\textit{The Metrical Theory of Jacobi-Perron Algorithm},
Lecture Notes in Mathematics \textbf{334}, Springer-Verlag, Berlin, 1973.
\bibitem{Schweiger2} F. Schweiger,
\textit{Multidimensional Continued Fractions},
Oxford Science Publications, Oxford University Press, 2000.
\bibitem{WVA} W. Van Assche,
\textit{Nearest neighbor recurrence relations for multiple orthogonal polynomials}, 
J. Approx.\ Theory \textbf{163} (2011), 1427--1448.
\bibitem{WVA-open} W. Van Assche (editor), 
\textit{Open Problems},  
J. Comput.\ Appl.\ Math.\ \textbf{48} (1993), 225--243.  
\bibitem{ECVA} W. Van Assche, E. Coussement, 
\textit{Some classical multiple orthogonal polynomials}, 
J. Comput.\ Appl.\ Math.\ \textbf{127} (2001), 317--347. 
\bibitem{WVA-Yaku} W. Van Assche, S. Yakubovich,
\textit{Multiple orthogonal polynomials associated with Macdonald functions}, 
Integral Transforms Spec.\ Funct.\ \textbf{9} (2000), no.3, 229--244.
\bibitem{Iseghem} J. Van Iseghem,
\textit{Vector orthogonal relations. Vector QD-algorithm},
J. Comput. Appl. Math. \textbf{19} (1987), 141--150.
\end{thebibliography}
\end{document}